\documentclass[reqno,11pt]{amsart}
\usepackage{amsmath,amssymb,latexsym,soul,cite,mathrsfs}
\usepackage{color,enumitem,graphicx}
\usepackage[colorlinks=true,urlcolor=blue,
citecolor=red,linkcolor=blue,linktocpage,pdfpagelabels,
bookmarksnumbered,bookmarksopen]{hyperref}
\usepackage[english]{babel}
\usepackage[left=2.6cm,right=2.6cm,top=2.8cm,bottom=2.8cm]{geometry}
\usepackage[hyperpageref]{backref}
\usepackage{lipsum}

 \usepackage{mathtools, nccmath}

\usepackage{bbm}
\usepackage{tcolorbox}

\newtheorem{theorem}{Theorem}
\newtheorem{lemma}[theorem]{Lemma}
\newtheorem{corollary}[theorem]{Corollary}
\newtheorem{proposition}[theorem]{Proposition}
\newtheorem{remark}[theorem]{Remark}

\newcommand{\innerthmname}{}

\theoremstyle{definition}

\makeatletter
\def\namedlabel#1#2{\begingroup
	#2%
	\def\@currentlabel{#2}%
	\phantomsection\label{#1}\endgroup
}
\makeatother

\usepackage{etoolbox}
\makeatletter
\patchcmd{\@maketitle}
  {\ifx\@empty\@dedicatory}
  {\ifx\@empty\@date \else {\vskip3ex \centering\footnotesize\@date\par\vskip1ex}\fi
   \ifx\@empty\@dedicatory}
  {}{}
\patchcmd{\@adminfootnotes}
  {\ifx\@empty\@date\else \@footnotetext{\@setdate}\fi}
  {}{}{}
\makeatother

\newcommand{\dive}{\mathrm{div}}

\newcommand{\ep}{\varepsilon}

\newcommand{\R}{\mathbb{R}}

\newcommand{\ns}{\hspace{-5pt}}

\DeclareMathOperator{\Ric}{Ric}

\newcommand{\ov}[1]{\widetilde{#1}}

\allowdisplaybreaks[2]

\numberwithin{equation}{section}
\numberwithin{theorem}{section}

\title[A new Duhamel-type principle with applications to geometric (in)equalities]{A new Duhamel-type principle with applications to geometric (in)equalities}

\author[M. Caselli]{Michele Caselli}
\author[L. Gennaioli]{Luca Gennaioli}

\address[M. Caselli]{Princeton University --  Fine Hall, 304 Washington Rd, Princeton, NJ 08540, USA
	\newline\indent 
	University of Sydney -- Quadrangle A14, Camperdown, NSW 2006, Australia}
\email{\href{mailto:mc3147@princeton.edu}{mc3147@princeton.edu}}

\address[L. Gennaioli]{University of Warwick -- Zeeman Building, Coventry, CV4 7AL, UK}
\email{\href{mailto:luca.gennaioli@warwick.ac.uk}{luca.gennaioli@warwick.ac.uk}}

\makeatletter
\def\l@subsection{\@tocline{2}{0pt}{2.5pc}{5pc}{}}
\def\l@subsubsection{\@tocline{2}{0pt}{5pc}{7.5pc}{}}
\makeatother

\date{\today}

\begin{document}

\begin{abstract}
    We introduce a simple new method, based on the Caffarelli-Silvestre extension and a Duhamel-type formula, to derive exact pointwise identities for fractional commutators and nonlinear compositions associated with the fractional Laplacian on general Riemannian manifolds.

As applications, we obtain a pointwise fractional Leibniz rule, a fractional Bochner's formula with an explicit Ricci curvature term, apparently the first of this kind, and exact remainders in the Córdoba-Córdoba and Kato inequalities for the fractional Laplacian. All these formulas are new even in the Euclidean space. 

\end{abstract}

	\maketitle

 \tableofcontents

 \section{Introduction} 

 In this work, we present a simple method to derive several exact pointwise identities for commutators and compositions of the fractional Laplacian. The key idea is that, representing the fractional Laplacian as the weighted normal derivative of the Caffarelli-Silvestre (CS) extension, the error terms measuring the failure of classical differential identities can be encoded as boundary values of solutions to a suitable inhomogeneous problem in the extension. Once this is observed, a Duhamel-type formula gives the desired identities in a unified and transparent way. 

 As an application, we obtain for the first time a fractional Bochner's formula for the fractional Laplacian, that is, an exact pointwise identity with an explicit Ricci curvature term on general (possibly non-compact) manifolds, without any assumption on the sign of the curvature of the underlying manifold. This is somewhat surprising, as the involved operators are non-local. On the other hand, for the formulae to even make sense, we need to ask for some integrability of the Ricci tensor when applied to horizontal gradients of the CS extension.
 
Also, remarkably, we identify an explicit remainder in the Kato inequality for the fractional Laplacian. This remainder is not merely nonnegative, but it is an explicit combination of integrals over the zero level sets of the function and of its extension.

\subsection{Relation to prior work}

The extension point of view for the fractional Laplacian, which
may have originated in the probability literature in \cite{ProbExtension}, classically goes back to the seminal work of Caffarelli and Silvestre \cite{CafSiextension}, and in the general self-adjoint setting to Stinga and Torrea \cite{StingaTorrea}.

On the one hand, extension methods have been used effectively in the study of fractional commutator estimates, notably in \cite{LS-sharpcomm} (see also \cite{Shen-comm-extension}), and to prove pointwise inequalities of Córdoba-Córdoba and Kato-type for fractional operators \cite{CafSire}. On the manifolds side, the fractional Laplacian has been studied through several complementary viewpoints, including spectral and heat-semigroup definitions, heat-kernel/singular-integral representation formulas \cite{AlonsoCordobaMartinez}, and via the extension in both compact \cite{CFSfrac} and noncompact settings \cite{flapHyperbolic}.

On the other hand, even in the Euclidean space, the extension-Duhamel viewpoint has not been used to obtain exact pointwise identities in a unified way. Indeed, the contribution of the present work differs from previous literature: rather than deriving estimates from the extension problem, we isolate a simple mechanism that yields exact pointwise identities for several fractional defects. 

 To the best of our knowledge, our Theorem \ref{thm: Bochner ricci} is the first result relating a Bochner-type commutator for the fractional Laplacian to an explicit Ricci curvature term, in the spirit of the classical Bochner's formula. Nevertheless, even on $\R^n$, we point out that the relative curvature dimension inequality ${\rm CD}(K, N)$ for the fractional Laplacian fails for every finite $N$; see \cite{Sperner-Weber-Rico}. 

 Similarly, Kato-type inequalities for the fractional Laplacian are well known, but to our knowledge, these results are formulated as inequalities and do not provide an explicit Kato formula that identifies the nonnegative defect in terms of the nodal sets of the function and its extension. In this regard, our Theorem \ref{thm: Kato} seems to be the first result that is able to do so.

 \subsection{Main results} 

  Let $(M,g)$ be a complete Riemannian manifold without boundary, and let $\ov M:= M \times ( 0,\infty)$ be the extended space endowed with the product metric. We refer to Section \ref{sec 2} for a list of all the notations and a precise definition of the spaces involved in our results.

 As outlined above, the starting point of the present work is a simple yet very useful observation that will serve as the common underlying mechanism for all the results in the paper. If we compare the CS extension of a composite quantity (a product, a squared gradient, or a nonlinear composition) with the corresponding composite of extensions, then the difference solves an inhomogeneous problem in the extended space $\ov M$ with zero Dirichlet data on $M\times \{0\}$. The forcing term is exactly the interaction term arising in the nonlocal procedure (a Leibniz rule defect, a Bochner/Ricci curvature term, or a nonlinear remainder). By a Duhamel-type principle, such a problem admits an explicit solution in terms of the fractional Poisson semigroup, and taking the weighted normal derivative at zero yields the desired identity on the base space $M$.

\begin{proposition}[Duhamel-type formula for the extension]\label{prop: rep formula solutions}
    Let $s\in (0,1)$, and $F \in L^1(\ov M)$, $V \in \widehat{H}^1_w (\widetilde M )$ be smooth functions, with $V$ solution of
    \begin{equation*}
    \begin{cases} 
      \widetilde \dive (z^{1-2s} \widetilde \nabla V )  = F     &   \mbox{in } \widetilde{M} ,  \\ V(\cdot, 0)=0 &  \mbox{on } M . 
    \end{cases}
    \end{equation*}
    Then
    \begin{equation*}
        \lim_{z \uparrow \infty }  z^{1-2s} \partial_z V (\cdot, z) - \lim_{z \downarrow 0 } z^{1-2s} \partial_z V (\cdot, z) = \int_0^\infty \mathcal{P}^{(s)}_z \big( F(\cdot,z) \big)  \, dz \qquad \mbox{in } \mathscr{D}'(M) .  
    \end{equation*}
     Here $\mathcal{P}^{(s)}_z$ denotes the fractional Poisson operator \eqref{eq: frac Poisson def}, which acts on $F(\cdot, z)$ for fixed $z>0$. 
\end{proposition}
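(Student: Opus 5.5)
The plan is to test the equation against $\mathcal{P}^{(s)}_z\varphi$, where $\varphi\in C^\infty_c(M)$ is arbitrary and $\Phi(x,z):=\mathcal{P}^{(s)}_z\varphi(x)$ is its Caffarelli--Silvestre extension. Two facts drive the argument. First, $\Phi$ solves $\widetilde\dive(z^{1-2s}\widetilde\nabla\Phi)=0$ in $\ov M$ with $\Phi(\cdot,0)=\varphi$. Second, the fractional Poisson operator is self-adjoint on $L^2(M)$, being a function of $-\Delta_g$. Hence
\[
\int_M \Big(\int_0^\infty \mathcal{P}^{(s)}_z F(\cdot,z)\,dz\Big)\varphi\,dV_g=\int_{\ov M} F\,\Phi\,dV_g\,dz ,
\]
and the right-hand side is finite because $F\in L^1(\ov M)$ and $\|\Phi\|_{L^\infty}\le\|\varphi\|_{L^\infty}$. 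This also shows that the right-hand side of the claimed identity is a well-defined distribution. So it suffices to compute $\int_{\ov M} F\Phi$ by a Green-type identity.

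Next, I cut off in $z$ to a slab $M\times(\ep,R)$, and in $x$ if $M$ is noncompact, using cutoffs $\eta_k$ based on the distance function with $|\nabla\eta_k|\le 1/k$. On the slab I apply Green's second identity for the operator $\widetilde\dive(z^{1-2s}\widetilde\nabla\,\cdot)$ to the pair $V,\Phi$. Since $\Phi$ is a solution, this gives
\[
\int_\ep^R\!\int_M F\,\Phi=\Big[\int_M z^{1-2s}\big(\partial_zV\,\Phi - V\,\partial_z\Phi\big)\,dV_g\Big]_{z=\ep}^{z=R},
\]
modulo lateral terms involving $\nabla\eta_k$. These lateral terms vanish as $k\to\infty$ because $V\in \widehat H^1_w$ and $\Phi$ has finite weighted energy. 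The finite energy of $\Phi$ also yields the bound $|z^{1-2s}\partial_z\Phi|\lesssim$ a bounded quantity as $z\to0$.

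Then I pass to the limits. As $\ep\downarrow0$, the term $V\,z^{1-2s}\partial_z\Phi$ tends to $0$: indeed $V(\cdot,\ep)\to0$, while $z^{1-2s}\partial_z\Phi(\cdot,\ep)\to -c_s(-\Delta)^s\varphi$ stays bounded. The term $z^{1-2s}\partial_zV\,\Phi$ tends to $\langle\lim_{z\downarrow0}z^{1-2s}\partial_zV,\varphi\rangle$, since $\Phi(\cdot,\ep)\to\varphi$ uniformly. As $R\uparrow\infty$, I use the decay of the Poisson kernel: $\Phi(\cdot,R)\to0$ (at least weakly), and $R^{1-2s}\partial_z\Phi(\cdot,R)\to0$. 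Combined with the $\widehat H^1_w$ control of $V$, whose weighted Dirichlet energy is finite, this kills the $V\partial_z\Phi$ term. The remaining term should converge to $\langle\lim_{z\uparrow\infty}z^{1-2s}\partial_zV,\varphi\rangle$, here interpreted through the existence of that limit as part of the statement. Putting the pieces together gives exactly the stated identity tested against $\varphi$, with the sign arranged by the orientation of the boundary terms.

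The main obstacle is the $z\to\infty$ end. Pairing $z^{1-2s}\partial_zV(\cdot,R)$ against $\Phi(\cdot,R)$ rather than against $\varphi$ requires either uniform bounds on $z^{1-2s}\partial_zV$ at large $z$, or an argument that $\Phi(\cdot,R)$ is close to a constant. On a manifold with positive bottom of spectrum, $\Phi(\cdot,R)\to0$ and the term simply disappears. In general, I would first try to integrate the equation in $z$ against $\Phi$ directly, writing $z^{1-2s}\partial_zV(\cdot,R)-z^{1-2s}\partial_zV(\cdot,\ep)=\int_\ep^R(F+\text{horizontal divergence})\,dz$. The horizontal term is then handled by self-adjointness of $\Delta_g$, which converts it into $\partial_z$-derivatives of $\Phi$ via the extension equation. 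Failing that, I would impose the needed large-$z$ decay as part of the meaning of the limit in the statement.
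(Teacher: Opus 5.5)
Your plan is the paper's proof step for step: test against $\Phi\eta_k$ with $\Phi=\mathcal{P}^{(s)}_z\varphi$, apply Green's second identity on $M\times(\ep,R)$ using $\mathcal{L}_s\Phi=0$, drop the cutoff terms, kill the two $V\partial_z\Phi$ boundary terms, and rewrite $\int_{\ov M}F\Phi$ via symmetry of the Poisson kernel and Fubini. Two justifications need correcting.
\begin{itemize}
\item The $L^2$-boundedness of $\ep^{1-2s}\partial_z\Phi(\cdot,\ep)$ does not come from the finite energy of $\Phi$. It comes from $\varphi\in{\rm Dom}((-\Delta)^s)$ and the Stinga--Torrea convergence of the Neumann data.
\item Both $V(\cdot,\ep)\to0$ in $L^2(M)$ and the control of the cutoff term $\int V\nabla\Phi\cdot\nabla\eta_k\,z^{1-2s}$ rest on the Hardy-type bound $\|V(\cdot,z)\|^2_{L^2(M)}\le \frac{z^{2s}}{2s}\int_{\ov M}|\partial_zV|^2t^{1-2s}\,d\mu\,dt$. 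This bound uses $V(\cdot,0)=0$; membership in $\widehat H^1_w$ alone only gives $L^2_{\rm loc}$.
\end{itemize}

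The real gap is at $R\uparrow\infty$, in two places.

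\textbf{The $V\partial_z\Phi$ term.} Knowing only that $R^{1-2s}\partial_z\Phi(\cdot,R)\to0$, plus finite energy of $V$, does not dispose of $R^{1-2s}\int_M V\partial_z\Phi(\cdot,R)$. The Hardy bound only gives $\|V(\cdot,R)\|_{L^2}\lesssim R^{s}$, which can grow. You need the rate $R^{1-2s}\|\partial_z\Phi(\cdot,R)\|_{L^2}\le C_sR^{-2s}\|\varphi\|_{L^2}$, from the heat-semigroup formula for the extension and $L^2$-contractivity of $P_t$. The product is then $O(R^{-s})$; this is exactly what the paper does.

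\textbf{The $\Phi\,\partial_z V$ term.} The obstacle you flag is genuine, and the paper does not address it either: it replaces $\Phi(\cdot,z)$ by $\varphi$ in the remaining boundary terms without comment. Note also that $\Phi(\cdot,R)$ does not tend to $0$ on a compact $M$; it tends to the mean of $\varphi$. Still, no extra decay hypothesis is needed, because the finite weighted energy of $V$ settles it.
\begin{itemize}
\item Set $w(z):=z^{1-2s}\partial_zV(\cdot,z)$. Then $\int_0^\infty\|w(z)\|^2_{L^2}z^{2s-1}\,dz<\infty$.
\item Hence $\int_R^{2R}\|w\|^2_{L^2}z^{2s-1}dz\to0$, while $\int_R^{2R}z^{2s-1}dz\sim R^{2s}$. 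So some $z_R\in[R,2R]$ satisfies $\|w(z_R)\|_{L^2}=o(R^{-s})$.
\item Therefore $|\int_M w(z_R)\Phi(\cdot,z_R)\,d\mu|\le\|w(z_R)\|_{L^2}\|\varphi\|_{L^2}\to0$.
\item The other terms of your identity converge as $R\uparrow\infty$, so this pairing has a limit, and that limit must be $0$.
\item The same estimate against $\psi\in C^\infty_c(M)$ shows that any $\mathscr{D}'$-limit of $w(z)$ as $z\uparrow\infty$ vanishes.
\end{itemize}
So the identity holds with zero term at infinity, which is the form in which the paper uses it.

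The same averaging on $[\ep,2\ep]$, combined with $\|\Phi(\cdot,\ep)-\varphi\|_{L^2}\lesssim\ep^{2s}\|(-\Delta)^s\varphi\|_{L^2}$, justifies replacing $\Phi(\cdot,\ep)$ by $\varphi$ at the bottom. Uniform convergence of $\Phi(\cdot,\ep)$ alone does not give this, and both you and the paper take it for granted.
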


In all the applications of Proposition \ref{prop: rep formula solutions} in this work (see Section \ref{sec: appl} below), it will be applied in cases when the boundary term at infinity vanishes, in which case the formula reads as 
    \begin{equation}\label{eq: Duhamel infinity zero}
        - \lim_{z \downarrow 0 } z^{1-2s} \partial_z V (\cdot, z) = \int_0^\infty \mathcal{P}^{(s)}_z \big( F(\cdot,z) \big)  \, dz .  
\end{equation}
 Indeed, the proofs of Theorems \ref{thm: baby Boch Gamma_1}, \ref{thm: Bochner ricci}, and \ref{thm: C-C general phi} follow an identical scheme: identify the interaction as a forcing term in the problem in the extended space, and go back to the base via \eqref{eq: Duhamel infinity zero}.

\subsection{Applications}\label{sec: appl} We now present several direct applications of Proposition \ref{prop: rep formula solutions}, marked below with the symbol $\bullet$. Let $-\Delta$ be the Laplace-Beltrami operator of $M$. For $s\in(0,1]$ let
\begin{equation*}
    \Lambda^{\hspace{-1pt} s} := -(-\Delta)^s , 
\end{equation*}
where $(-\Delta)^s$ denotes the fractional Laplacian in the sense of spectral theory (see Section \ref{sec 2} and Remark \ref{rem: which manifolds}). Thus, with our notation $\Lambda^{\hspace{-1pt} s}$ is a nonpositive operator. 

  \medskip

\noindent\textbf{$\bullet$ Pointwise commutator identity for the fractional Laplacian.}  Define the first-order carré du champ (i.e., the Leibniz defect) on smooth functions by
\begin{equation*}
\Gamma_1(u,v):=\frac12\Big( \Lambda^{\hspace{-1pt} s}(uv)-u\Lambda^{\hspace{-1pt} s} v-v\Lambda^{\hspace{-1pt} s} u\Big) , \qquad \Gamma_1(u):=\Gamma_1(u,u) .
\end{equation*}
For general $u,v \in H^s(M)$, this has to be understood in the sense of distributions as
\begin{equation*}
    \Gamma_1(u,v)(\varphi) := \frac12\Big( \langle u v,  \Lambda^{\hspace{-1pt} s} \varphi \rangle_{L^2(M)}- \langle \Lambda^{\hspace{-1pt} s/2}  (u \varphi),  \Lambda^{\hspace{-1pt} s/2} v \rangle_{L^2(M)} -\langle \Lambda^{\hspace{-1pt} s/2} (v \varphi), \Lambda^{\hspace{-1pt} s/2} u \rangle_{L^2(M)} \Big) , 
\end{equation*}
for all $\varphi \in C_c^\infty(M)$. 

  \begin{theorem}[Pointwise fractional Leibniz rule]\label{thm: baby Boch Gamma_1} Let $s\in (0,1)$. Let $u,v \in H^{s}(M)$ and $U,V \in {\widehat H}^1_w (\widetilde M)$ denote their CS extensions. Then    
\begin{equation}\label{eq: Gamma_1 rep}
    \Gamma_1(u,v) =  \beta_s \int_0^\infty \mathcal{P}_z^{(s)}( \widetilde \nabla U (\cdot, z) \cdot \widetilde \nabla V (\cdot, z) ) z^{1-2s} \, dz    \quad \mbox{ in } \mathscr{D}'(M) , 
\end{equation}
where $\beta_s > 0 $ is given by \eqref{eq: beta def}. Moreover, if $u$ and $v$ are smooth, this identity holds pointwise. 
 \end{theorem}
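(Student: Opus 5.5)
We apply Proposition \ref{prop: rep formula solutions} to the defect of the extension of the product: set $W := \mathcal{E}(uv) - UV$, where $\mathcal{E}(uv)$ denotes the CS extension of $uv$. We first treat smooth $u,v$ (with enough decay that all extensions lie in $\widehat H^1_w(\widetilde M)$) and recover the general case by density at the end.

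\textbf{Step 1 (the forcing term).} Since $U$ and $V$ solve $\widetilde\dive(z^{1-2s}\widetilde\nabla U)=0=\widetilde\dive(z^{1-2s}\widetilde\nabla V)$, the product rule gives
\begin{equation*}
\widetilde\dive\big(z^{1-2s}\widetilde\nabla(UV)\big) = 2 z^{1-2s}\,\widetilde\nabla U\cdot\widetilde\nabla V .
\end{equation*}
Hence $W$ solves $\widetilde\dive(z^{1-2s}\widetilde\nabla W) = F := -2z^{1-2s}\widetilde\nabla U\cdot\widetilde\nabla V$. Its boundary trace is $W(\cdot,0)=uv-uv=0$.

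\textbf{Step 2 (boundary values).} We use the normalization of the extension, $\Lambda^{s}w = \beta_s^{-1}\lim_{z\downarrow0} z^{1-2s}\partial_z \mathcal{E}(w)$; the constant $\beta_s$ is fixed by \eqref{eq: beta def} so that this holds. The product rule then gives
\begin{equation*}
\lim_{z\downarrow 0} z^{1-2s}\partial_z W = \beta_s\big(\Lambda^{s}(uv) - u\Lambda^s v - v\Lambda^s u\big) = 2\beta_s\Gamma_1(u,v).
\end{equation*}
At infinity we need $z^{1-2s}\partial_z W(\cdot,z)\to0$ in $\mathscr D'(M)$.
\begin{itemize}
\item For $\mathcal{E}(uv)$, this follows from the semigroup representation: $\partial_z$ of the Poisson extension decays in $L^2$.
\item For $UV$, we have $z^{1-2s}(U\partial_zV+V\partial_zU)$. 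Here $\|U(\cdot,z)\|_{L^\infty}$ is bounded and $z^{1-2s}\partial_z V\to0$ (spectrally, $z^{1-2s}\partial_z$ of the kernel $\phi(z\sqrt\lambda)$ behaves like $\lambda^{s}(z\sqrt\lambda)^{1-2s}\phi'(z\sqrt\lambda)$, which decays exponentially).
\end{itemize}
With this, \eqref{eq: Duhamel infinity zero} applies and gives
\begin{equation*}
-2\beta_s\Gamma_1(u,v) = \int_0^\infty\mathcal P_z^{(s)}(F(\cdot,z))\,dz.
\end{equation*}
This is \eqref{eq: Gamma_1 rep} up to the sign and the placement of $\beta_s$. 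Those depend on the precise convention in \eqref{eq: beta def}, relating $\beta_s$ to the constant $d_s$ in $-\lim z^{1-2s}\partial_z U = d_s(-\Delta)^s u$, and on the normalization of $\mathcal P^{(s)}_z$. We expect the bookkeeping to give exactly $\beta_s>0$. As a sanity check on the sign, $\Gamma_1(u)\ge0$ and the right-hand side with $U=V$ is nonnegative. We also need $F\in L^1(\widetilde M)$, which holds for $u,v\in H^s$ by Cauchy–Schwarz: $\int z^{1-2s}|\widetilde\nabla U||\widetilde\nabla V|\le \|U\|_{\widehat H^1_w}\|V\|_{\widehat H^1_w}$.

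\textbf{Step 3 (general $u,v\in H^s(M)$).} We approximate by smooth $u_k\to u$, $v_k\to v$ in $H^s$.
\begin{itemize}
\item The distributional definition of $\Gamma_1(u_k,v_k)(\varphi)$ converges, since $u_kv_k\to uv$ in $L^1$ and $u_k\varphi\to u\varphi$ in $H^s$ for $\varphi\in C_c^\infty$.
\item On the right, $z^{1-2s}\widetilde\nabla U_k\cdot\widetilde\nabla V_k\to z^{1-2s}\widetilde\nabla U\cdot\widetilde\nabla V$ in $L^1(\widetilde M)$. Testing against $\varphi$ and using self-adjointness of $\mathcal P^{(s)}_z$ turns the pairing into $\int_{\widetilde M} z^{1-2s}\widetilde\nabla U\cdot\widetilde\nabla V\,\mathcal P^{(s)}_z\varphi$, and $\mathcal P^{(s)}_z\varphi$ is uniformly bounded. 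So both sides pass to the limit.
\end{itemize}

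\textbf{Main obstacle.} We expect the main difficulty to be the justification at infinity and the regularity/integrability hypotheses of Proposition \ref{prop: rep formula solutions} on a possibly noncompact $M$. In particular, $W$ must lie in $\widehat H^1_w$, and $UV$ need not have finite weighted energy. If this fails, we plan to apply the proposition on truncated strips $M\times(0,R)$, or to argue directly through the Duhamel formula in the spectral representation, and then let $R\to\infty$ using the decay estimates above.
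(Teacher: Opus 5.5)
Your argument is the same as the paper's proof. You take $D=P-UV$ with $P$ the extension of $uv$, observe $\mathcal{L}_s D=-2z^{1-2s}\widetilde\nabla U\cdot\widetilde\nabla V$ with zero trace, and apply the Duhamel formula. You remove the term at $z=\infty$ via the semigroup decay $z^{1-2s}\|\partial_z U(\cdot,z)\|_{L^p}\le C_s z^{-2s}\|u\|_{L^p}$ together with boundedness of $U,V$. The general case then follows by density, testing against $\Phi=\mathcal{P}^{(s)}_z\varphi$.

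The one concrete slip is the constant, which you leave unresolved because your normalization is inverted. The paper's convention, consistent with Stinga--Torrea and the definition of $\beta_s$, is $\Lambda^s w=\beta_s\lim_{z\downarrow0}z^{1-2s}\partial_z\mathcal{E}(w)$, not $\beta_s^{-1}\lim_{z\downarrow0}z^{1-2s}\partial_z\mathcal{E}(w)$. With it, $\lim_{z\downarrow0}z^{1-2s}\partial_z W=2\beta_s^{-1}\Gamma_1(u,v)$, and the Duhamel formula gives
\[
-2\beta_s^{-1}\Gamma_1(u,v)=-2\int_0^\infty\mathcal{P}_z^{(s)}\big(\widetilde\nabla U\cdot\widetilde\nabla V\big)z^{1-2s}\,dz ,
\]
which is exactly the claimed identity. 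There is no sign issue: the minus sign in $F$ cancels the one in the formula. Your convention would instead produce $\beta_s^{-1}$ in front.

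Your worry that $UV$ might not lie in $\widehat H^1_w(\widetilde M)$ is also unfounded for $u,v\in C_c^\infty(M)$. By the maximum principle $U,V\in L^\infty(\widetilde M)$, so $\widetilde\nabla(UV)=U\widetilde\nabla V+V\widetilde\nabla U\in L^2(z^{1-2s}d\mu\,dz)$, and no truncation in $z$ is needed.
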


 We emphasize that the surprising and useful part of \eqref{eq: Gamma_1 rep} is not that $\Gamma_1(u,v)(x)$ has a pointwise representation per se. For example, for $M=\R^n$ it is straightforward (see \cite[Sec. 20]{GarofaloFracTh}) that 
\begin{equation*}
    \Gamma_1(u,v)(x) = C_{n,s} \int_{\R^n} \frac{(u(x)-u(y))(v(x)-v(y))}{|x-y|^{n+2s}} dy .  
\end{equation*}
The surprising part of \eqref{eq: Gamma_1 rep} is that $\Gamma_1(u,v)(x)$ admits a representation as an average of \emph{classical derivatives} of the extensions \emph{evaluated at the same point $x$}. This will have important consequences, for example, in the next application regarding Bochner's formula for the fractional Laplacian.

 \medskip

\noindent\textbf{$\bullet$ Fractional Bochner-type identities.} Define the second-order carré du champ 
\[
\Gamma_2(u,v):=\frac12\Big( \Lambda^{\hspace{-1pt} s} \Gamma_1(u,v)-\Gamma_1(u,\Lambda^{\hspace{-1pt} s} v)-\Gamma_1(v,\Lambda^{\hspace{-1pt} s} u)\Big),
\qquad  \Gamma_2(u):=\Gamma_2(u,u),  
\]
and the Bochner-type defects
 \begin{align*}
  \mathcal{A} (u,v) & := \frac{1}{2} \Big( \Delta \Gamma_1(u,v) - \Gamma_1(u, \Delta v ) - \Gamma_1(v , \Delta u ) \Big) , & \mathcal{A} (u)  :=  \mathcal{A} (u,u) , \\
         \mathcal{B} (u,v) & := \frac{1}{2} \Big( \Lambda^{\hspace{-1pt} s} (\nabla u \cdot \nabla v) - \nabla u \cdot \nabla \Lambda^{\hspace{-1pt} s} v - \nabla v \cdot \nabla \Lambda^{\hspace{-1pt} s} u \Big) ,  & \mathcal{B} (u)  :=  \mathcal{B} (u,u) .
     \end{align*}
In the local case $s=1$, by Bochner's formula the last two expressions both equal  
\begin{equation*}
   \mathcal{A}(u) = \mathcal{B} (u)= |\nabla^2 u|^2 + \Ric(\nabla u, \nabla u). 
\end{equation*}

One application of the techniques in this work is a clean fractional version of Bochner's formula, which explicitly accounts for the classical Ricci curvature.

 \begin{theorem}[Fractional Bochner's formula]\label{thm: Bochner ricci}
    Let $s\in (0,1)$, $u\in C_c^\infty(M)$ and $U \in {\widehat H}^1_w (\widetilde M) $ be its CS extension. Assume in addition\footnote{Observe that this assumption is not playing the role of a curvature lower bound, and instead is a necessary condition for the displayed r.h.s. to be well-defined.} that $ z^{1-2s}\big( |\nabla^2 U|^2+ {\rm Ric}(\nabla U ,\nabla U )\big) \in L^1(\widetilde{M})$. Then
     \begin{equation}\label{eq: eq for A}
          \mathcal{A}(u) =\mathcal{B}(u) = \beta_s \int_0^\infty \mathcal{P}_z^{(s)} \Big( |\ov \nabla \nabla  U(\cdot, z) |^2 + \Ric(\nabla U (\cdot, z) , \nabla U (\cdot, z) ) \Big) z^{1-2s}  dz  . 
     \end{equation}
 \end{theorem}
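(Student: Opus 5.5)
We will prove both equalities with the scheme behind Theorem \ref{thm: baby Boch Gamma_1}: build an auxiliary function in $\widetilde M$ that solves an inhomogeneous extension problem with zero Dirichlet data, identify its forcing term, and then apply \eqref{eq: Duhamel infinity zero}.

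\textbf{The identity for $\mathcal{B}$.} Let $W$ be the CS extension of $|\nabla u|^2$ and set
\[
V:=W-|\nabla U|^2 ,
\]
where $\nabla$ is the horizontal gradient. Then $V(\cdot,0)=0$. To compute $\widetilde{\dive}(z^{1-2s}\widetilde\nabla |\nabla U|^2)$, split the operator as
\[
z^{1-2s}\Delta+\partial_z\big(z^{1-2s}\partial_z\big).
\]
For the horizontal part we use the classical Bochner formula on each slice,
\[
\tfrac12\Delta|\nabla U|^2=|\nabla^2U|^2+\nabla U\cdot\nabla\Delta U+\Ric(\nabla U,\nabla U).
\]
For the vertical part, $\partial_z$ commutes with $\nabla$ on the product, so
\[
\tfrac12\partial_z\big(z^{1-2s}\partial_z|\nabla U|^2\big)=z^{1-2s}|\nabla\partial_zU|^2+\nabla U\cdot\nabla\big(\partial_z(z^{1-2s}\partial_zU)\big).
\]
Adding the two and using the equation of $U$, the terms linear in $\nabla U$ combine into $\nabla U\cdot\nabla\big(\widetilde{\dive}(z^{1-2s}\widetilde\nabla U)\big)=0$. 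Hence
\[
\widetilde\dive(z^{1-2s}\widetilde\nabla V)=-2z^{1-2s}\big(|\widetilde\nabla\nabla U|^2+\Ric(\nabla U,\nabla U)\big)=:F .
\]
By the hypothesis, $F\in L^1(\widetilde M)$. We then apply Proposition \ref{prop: rep formula solutions}; the boundary term at infinity should vanish because $U$ and $W$ decay as $z\to\infty$. Since $\lim z^{1-2s}\partial_zW$ and $\lim z^{1-2s}\partial_zU$ equal $-c_s^{-1}\Lambda^{s}$ applied to the data, with $c_s$ the CS normalization, we get
\[
-\lim_{z\downarrow0}z^{1-2s}\partial_zV=c_s^{-1}\Big(\Lambda^{s}|\nabla u|^2-2\nabla u\cdot\nabla\Lambda^{s}u\Big).
\]
Here we use that $\partial_z$ commutes with $\nabla$, so $\lim z^{1-2s}\partial_z|\nabla U|^2=2\nabla u\cdot\nabla\lim z^{1-2s}\partial_zU$. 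Matching constants with \eqref{eq: beta def}, as in Theorem \ref{thm: baby Boch Gamma_1}, gives $\mathcal{B}(u)=\beta_s\int_0^\infty\mathcal{P}^{(s)}_z(\dots)z^{1-2s}dz$.

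\textbf{The identity for $\mathcal{A}$.} Rather than reproving it, we would show $\mathcal{A}(u)=\mathcal{B}(u)$ directly from Theorem \ref{thm: baby Boch Gamma_1}. Since $\Delta$ commutes with $\Lambda^{s}$ (spectral calculus), expanding the definitions gives
\[
\mathcal{A}(u)=\tfrac12\big(\Lambda^s\Delta(u^2)-2u\Lambda^s\Delta u-2\Delta u\Lambda^s u\big)-\ldots
\]
Using $\Delta(u^2)=2u\Delta u+2|\nabla u|^2$ and $\Delta(u\Lambda^su)=u\Delta\Lambda^su+\Lambda^su\,\Delta u+2\nabla u\cdot\nabla\Lambda^su$, all terms cancel except $\Lambda^s|\nabla u|^2-2\nabla u\cdot\nabla\Lambda^su$. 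This yields exactly $\mathcal{A}(u)=\mathcal{B}(u)$ as a purely algebraic identity, valid since $u\in C_c^\infty$ and all expressions are smooth.

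\textbf{Main obstacle.} The hard part is justifying the use of Proposition \ref{prop: rep formula solutions} for $V$: checking $V\in\widehat H^1_w$, that the boundary flux at $z\to\infty$ vanishes, and that the limit at $z=0$ commutes with $\nabla$. Regularity of $U$ up to $z=0$ is only $C^{2s}$-type in $z$, but horizontal smoothness follows because $\nabla U$ is (morally) the extension of $\nabla u$ on a manifold, where commutation fails by curvature. So I would avoid that claim and use only that $U(\cdot,z)=\mathcal{P}_z u$ is smooth in $x$ with $z^{1-2s}\partial_zU\to c\Lambda^su$ in $C^1_{\rm loc}$. At infinity I would use the decay $|\widetilde\nabla U|\lesssim z^{-1}$, coming from spectral representations, to kill the flux.
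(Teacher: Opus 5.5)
Your argument is correct. For $\mathcal{B}$ it is exactly the paper's proof: the extension $W$ of $|\nabla u|^2$, the slice-wise Bochner computation of Lemma \ref{lem: boch extension}, and Proposition \ref{prop: rep formula solutions} with vanishing flux at infinity.

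The genuine difference is how you obtain $\mathcal{A}$. The paper computes $\mathcal{A}(u)$ independently. It applies $\Delta$ to the representation of $\Gamma_1(u)$ from Theorem \ref{thm: baby Boch Gamma_1} and moves $\Delta$ inside $\mathcal{P}_z^{(s)}$ and the $z$-integral, using the $L^1$ bounds of Lemma \ref{lem: L^1 bounds}. It then applies Bochner to $|\widetilde\nabla U|^2$ on each slice. Finally it recognises the leftover $\beta_s\int_0^\infty\mathcal{P}_z^{(s)}(\widetilde\nabla U\cdot\widetilde\nabla\Delta U)z^{1-2s}\,dz$ as $\Gamma_1(u,\Delta u)$, because $\Delta U$ is the extension of $\Delta u$. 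In the paper, $\mathcal{A}=\mathcal{B}$ emerges only as a by-product of two separate computations, and is called non-trivial.

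You observe that it is formal. Your displayed partial expansion is garbled, but the conclusion is right, since expanding the definitions gives
\[
\mathcal{A}(u)-\mathcal{B}(u)=\tfrac14\big(\Delta\Lambda^{s}-\Lambda^{s}\Delta\big)(u^2)-\tfrac12\,u\,\big(\Delta\Lambda^{s}-\Lambda^{s}\Delta\big)u .
\]
This vanishes by spectral calculus, because $u,u^2\in\bigcap_k{\rm Dom}(\Delta^k)$ and every function involved is smooth. It is just the symmetry, for commuting generators, of the mixed second-order carr\'e du champ.

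What each route buys:
\begin{itemize}
\item Your route gives $\mathcal{A}(u)=\mathcal{B}(u)$ with no integrability hypothesis at all, and needs only one analytic computation.
\item Your identity also makes either of the paper's two computations redundant. One could keep the paper's short $\mathcal{A}$-computation and drop the second Duhamel argument, with its boundary analysis.
\item The paper's route buys an independent derivation that exhibits the formula for $\mathcal{A}$ as a direct corollary of the pointwise Leibniz rule.
\end{itemize}

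Three details in your $\mathcal{B}$ part need fixing.

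\textbf{The sign convention.} In the paper's normalisation $\Lambda^{s}u=\beta_s\lim_{z\downarrow0}z^{1-2s}\partial_zU$, with no minus sign (recall $\Lambda^s=-(-\Delta)^s$). With your stated sign, \eqref{eq: Duhamel infinity zero} would give $\mathcal{B}(u)=-c_s\int_0^\infty\cdots$. No positive constant can repair that, so this is not a matter of ``matching constants''.

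\textbf{Integrability of $F$.} The hypothesis alone does not give $F\in L^1(\widetilde M)$, because $|\widetilde\nabla\nabla U|^2=|\nabla^2U|^2+|\nabla\partial_zU|^2$. The mixed term needs its own bound. For instance, $\|\nabla\partial_zU(\cdot,z)\|_{L^2(M)}=\|\partial_z\mathcal{P}_z^{(s)}(-\Delta)^{1/2}u\|_{L^2(M)}$, so its weighted integral is controlled by the extension energy of $(-\Delta)^{1/2}u$ (Lemma \ref{lem: L^1 bounds}).

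\textbf{The flux at infinity.} Spectral calculus does not give a pointwise bound $|\nabla U|\lesssim z^{-1}$ on a general complete manifold. That would require gradient estimates for the heat semigroup, typically a lower Ricci bound. Only integrated bounds are needed, as in Lemma \ref{lem: decay at inf}:
\begin{itemize}
\item $z^{1-2s}\|\partial_zW(\cdot,z)\|_{L^1}\lesssim z^{-2s}$;
\item $\|\nabla U(\cdot,z)\|_{L^2}\lesssim\|\nabla u\|_{L^2}$;
\item $z^{1-2s}\|\nabla\partial_zU(\cdot,z)\|_{L^2}\lesssim z^{-2s}\|\nabla u\|_{L^2}$.
\end{itemize}
Together with Cauchy--Schwarz, these give $z^{1-2s}\partial_zV\to0$ in $L^1(M)$.
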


Observe that in the proof of \eqref{eq: eq for A}, it is not trivial to infer that the defects $\mathcal{A}(u)$ and $\mathcal{B}(u)$ are equal. Indeed, first we shall prove the equality between $\mathcal{A}(u)$ and the r.h.s. via a direct application of \eqref{eq: Gamma_1 rep} and (the classical) Bochner's formula. Secondly, we will prove the more involved equality between $\mathcal{B}(u)$ and the r.h.s., ultimately leading to $\mathcal{A}(u)=\mathcal{B}(u)$. 

As a corollary of this formula, we also obtain an explicit representation of the second-order carré du champ. 

  \begin{corollary}\label{thm: Gamma_2}
  Under the assumptions of Theorem \ref{thm: Bochner ricci} we have
     \begin{align*}
            \Gamma_2(u) & = \beta_s \int_0^\infty \mathcal{P}_z^{(s)} \Big(\mathcal{B}(U (\cdot, z)) + \Gamma_1(\partial_z U(\cdot, z)) \Big)  z^{1-2s}  dz . 
     \end{align*}
 \end{corollary}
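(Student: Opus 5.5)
Here is the approach. By definition, $\Gamma_2(u)=\Lambda^{\hspace{-1pt} s}\Gamma_1(u)-\Gamma_1(u,\Lambda^{\hspace{-1pt} s}u)$. I would first apply Theorem \ref{thm: baby Boch Gamma_1} to rewrite both terms through the extension. Write $W$ for the CS extension of $\Lambda^{\hspace{-1pt} s}u$. Since $\Lambda^{\hspace{-1pt} s}$ commutes with the Poisson semigroup, we have $W(\cdot,z)=\Lambda^{\hspace{-1pt} s}U(\cdot,z)$. Then
\begin{equation*}
\Gamma_1(u,\Lambda^{\hspace{-1pt} s}u)=\beta_s\int_0^\infty \mathcal{P}^{(s)}_z\Big(\nabla U\cdot\nabla \Lambda^{\hspace{-1pt} s}U+\partial_zU\,\partial_z\Lambda^{\hspace{-1pt} s}U\Big)z^{1-2s}\,dz .
\end{equation*}
For the first term, $\Lambda^{\hspace{-1pt} s}$ also commutes with $\mathcal{P}^{(s)}_z$ (functional calculus). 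This gives
\begin{equation*}
\Lambda^{\hspace{-1pt} s}\Gamma_1(u)=\beta_s\int_0^\infty \mathcal{P}^{(s)}_z\Big(\Lambda^{\hspace{-1pt} s}|\nabla U|^2+\Lambda^{\hspace{-1pt} s}(\partial_zU)^2\Big)z^{1-2s}\,dz ,
\end{equation*}
where $\Lambda^{\hspace{-1pt} s}$ acts in the $x$ variable at fixed $z$. Here I use $|\widetilde\nabla U|^2=|\nabla U|^2+(\partial_zU)^2$.

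The second step is to subtract the two integrands at each fixed $z$. Horizontally, $\Lambda^{\hspace{-1pt} s}|\nabla U|^2-2\nabla U\cdot\nabla\Lambda^{\hspace{-1pt} s}U=2\mathcal{B}(U(\cdot,z))$ by the definition of $\mathcal{B}$ applied to the function $U(\cdot,z)$ on $M$. Vertically, $\Lambda^{\hspace{-1pt} s}(\partial_zU)^2-2\partial_zU\,\Lambda^{\hspace{-1pt} s}\partial_zU=2\Gamma_1(\partial_zU(\cdot,z))$, using $\partial_z\Lambda^{\hspace{-1pt} s}U=\Lambda^{\hspace{-1pt} s}\partial_zU$. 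Collecting the factor $\tfrac12$ yields exactly the claimed integrand $\mathcal{B}(U(\cdot,z))+\Gamma_1(\partial_zU(\cdot,z))$.

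The main obstacle is justification, not algebra. I need to pass $\Lambda^{\hspace{-1pt} s}$ inside the $z$-integral and through $\mathcal{P}^{(s)}_z$, which I would do in the distributional sense: pair with $\varphi\in C^\infty_c(M)$ and move $\Lambda^{\hspace{-1pt} s}$ onto $\varphi$ by self-adjointness. I also need to ensure that each of the integrals converges separately, so that the difference may be split. For this I would use the decay of $U$ and its derivatives in $z$ for $u\in C^\infty_c(M)$, together with the integrability hypothesis of Theorem \ref{thm: Bochner ricci}. Through \eqref{eq: eq for A} applied at each level, that hypothesis controls the $\mathcal{B}(U(\cdot,z))$ part. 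Alternatively, I could write $\mathcal{B}(U(\cdot,z))$ itself via Theorem \ref{thm: Bochner ricci} to confirm that the integrand is $L^1$ against $z^{1-2s}$.
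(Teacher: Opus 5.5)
Your argument is essentially the paper's: both express $\Lambda^{s}\Gamma_1(u)$ and $\Gamma_1(u,\Lambda^{s}u)$ via Theorem \ref{thm: baby Boch Gamma_1}, use that $\Lambda^{s}U$ is the CS extension of $\Lambda^{s}u$, split $|\widetilde\nabla U|^2=|\nabla U|^2+(\partial_zU)^2$, and recognize $\mathcal{B}(U(\cdot,z))+\Gamma_1(\partial_zU(\cdot,z))$ slice by slice; the paper simply runs the same computation backwards from $\beta_s\int_0^\infty\mathcal{P}_z^{(s)}(\mathcal{B}(U))z^{1-2s}\,dz$ and does not discuss the interchanges at all. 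Two corrections: first, your opening line should read $\Gamma_2(u)=\tfrac12\Lambda^{s}\Gamma_1(u)-\Gamma_1(u,\Lambda^{s}u)$ (your second step already uses this normalization, so the conclusion is unaffected); second, your idea of invoking Theorem \ref{thm: Bochner ricci} ``at each level'' is not directly available, since $U(\cdot,z)$ is not compactly supported in general and, for $s\neq\tfrac12$, its CS extension is not $U(\cdot,z+\cdot)$.
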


\medskip

\noindent\textbf{$\bullet$ Exact remainder in the Córdoba-Córdoba inequality.} For $s\in (0,1)$ and $\phi \in C^2(\R)$ convex there holds the well-known Córdoba-Córdoba inequality
\begin{equation}\label{eq: C-C}
    \Lambda^{\hspace{-1pt} s} (\phi(u)) \ge \phi'(u) \Lambda^{\hspace{-1pt} s} u .
 \end{equation}
The inequality was originally proved on $\R^n$ in \cite{CordobaCordoba}, and it has since been extended in several directions, including closed manifolds \cite{CordobaMartinez}, the Dirichlet fractional Laplacian in bounded domains \cite{ConstantinIgna}, and a broad unifying approach (similar to the technique in this work but without characterizing the exact remainders) covering many settings \cite{CafSire}.

The methods in this work also yield an exact pointwise remainder in the Córdoba-Córdoba inequality \emph{for arbitrary nonlinearities}, that is not necessarily convex.

  \begin{theorem}[Exact remainder in the Córdoba-Córdoba inequality]\label{thm: C-C general phi}  Let $s\in (0,1)$, $u\in C_c^\infty(M)$ and $U \in {\widehat H}^1_w (\widetilde M)$ be its CS extension. Let $\phi \in C^2(\R)$, with the additional assumption $\phi(0)=0$ when $M$ is non-compact. Then 
\begin{equation*}
  \Lambda^{\hspace{-1pt} s}  (\phi(u)) - \phi'(u) \Lambda^{\hspace{-1pt} s} u =   \beta_s  \int_0^\infty \mathcal{P}_z^{(s)} \Big(\phi''(U)|\widetilde \nabla U|^2(\cdot, z) \Big) z^{1-2s} \, dz .
 \end{equation*}
 In particular, if $\phi$ is convex, we recover the Córdoba-Córdoba inequality \eqref{eq: C-C}. 
\end{theorem}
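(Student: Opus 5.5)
We follow the scheme announced after Proposition \ref{prop: rep formula solutions}. Set $W := \phi(U)$ on $\widetilde M$, and let $\Phi\in \widehat H^1_w(\widetilde M)$ be the CS extension of $\phi(u)$. The condition $\phi(0)=0$ in the non-compact case makes $\phi(u)\in C_c^\infty(M)$, so this extension makes sense. Since $\widetilde\dive(z^{1-2s}\widetilde\nabla U)=0$, the chain rule gives
\begin{equation*}
\widetilde \dive\big(z^{1-2s}\widetilde\nabla \phi(U)\big) = \phi'(U)\,\widetilde\dive(z^{1-2s}\widetilde\nabla U) + z^{1-2s}\phi''(U)|\widetilde\nabla U|^2 = z^{1-2s}\phi''(U)|\widetilde\nabla U|^2 .
\end{equation*}
We therefore take $V:=\phi(U)-\Phi$. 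It vanishes on $M\times\{0\}$ and solves $\widetilde\dive(z^{1-2s}\widetilde\nabla V)=F$ with $F:=z^{1-2s}\phi''(U)|\widetilde\nabla U|^2$.

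\textbf{Boundary data.} Using the normalization of the paper (the same one behind $\beta_s$ in Theorem \ref{thm: baby Boch Gamma_1}), we have $-\lim_{z\downarrow 0} z^{1-2s}\partial_z \Psi = \beta_s^{-1}\Lambda^{s}\psi$ for the extension $\Psi$ of $\psi$. Hence
\begin{equation*}
-\lim_{z\downarrow0} z^{1-2s}\partial_z V = -\phi'(u)\lim_{z\downarrow0} z^{1-2s}\partial_zU + \lim_{z\downarrow 0} z^{1-2s}\partial_z\Phi = \beta_s^{-1}\big(\Lambda^s(\phi(u))-\phi'(u)\Lambda^s u\big),
\end{equation*}
with the signs fixed by the same convention as in the proof of Theorem \ref{thm: baby Boch Gamma_1}. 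Applying \eqref{eq: Duhamel infinity zero} and multiplying by $\beta_s$ gives the identity. If $\phi$ is convex, then $\phi''\ge0$. The Poisson operator $\mathcal P^{(s)}_z$ preserves positivity, so the right-hand side is nonnegative, which is \eqref{eq: C-C}.

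\textbf{Main obstacle.} The difficulty is verifying the hypotheses of Proposition \ref{prop: rep formula solutions} and showing that the boundary term at infinity vanishes.
\begin{itemize}
\item \emph{Integrability of $F$.} On $M\times[0,R]$, $U$ is bounded, since $|U|\le\|u\|_\infty$ by the maximum principle, or equivalently because $\mathcal P_z$ is sub-Markovian. So $\phi''(U)$ is bounded there. The weighted energy bound $\int z^{1-2s}|\widetilde\nabla U|^2<\infty$ then gives $F\in L^1(\widetilde M)$.
\item \emph{Membership $V\in\widehat H^1_w$.} We have $|\widetilde\nabla\phi(U)|\le \sup|\phi'|\,|\widetilde\nabla U|$, with the supremum over the bounded range of $U$, and $\Phi$ lies in the space by construction. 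In the non-compact case, $\phi(0)=0$ gives the required decay of $\phi(U)$ as well.
\item \emph{Vanishing at infinity.} We need $z^{1-2s}\partial_z V(\cdot,z)\to0$ as $z\to\infty$ in $\mathscr D'(M)$. For $U$ and $\Phi$ this follows from the decay of $z^{1-2s}\partial_z\mathcal P^{(s)}_z$ applied to $L^2$ data, which we obtain from the spectral representation via Bessel functions. For $\phi(U)$ we use $\partial_z\phi(U)=\phi'(U)\partial_zU$ and the boundedness of $\phi'(U)$.
\item \emph{Compact case.} Constants are harmonic and $U\to\bar u$ as $z\to\infty$, but $\partial_zU$ still decays, so the argument goes through without change.
\end{itemize}
Pointwise validity then follows from the smoothness of $u$, as in Theorem \ref{thm: baby Boch Gamma_1}.
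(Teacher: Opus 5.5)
Your proposal is correct and follows the paper's proof. Your $V=\phi(U)-\Phi$ is just $-D$ for the paper's $D=U_\phi-\phi(U)$, the forcing term $z^{1-2s}\phi''(U)|\widetilde\nabla U|^2$ and the application of the Duhamel formula are the same, and the boundary term at infinity vanishes for the same reason: decay of $z^{1-2s}\partial_z U$ plus boundedness of $\phi'(U)$. You get that decay in $L^2$ from the spectral calculus, while the paper gets it in $L^1$ from the heat-semigroup formula.

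Two slips to fix. First, the paper's normalization is $\lim_{z\downarrow 0} z^{1-2s}\partial_z\Psi=\beta_s^{-1}\Lambda^{s}\psi$, with no minus sign; taken literally, your stated version flips the sign of the final identity, though your displayed boundary computation is right under the correct convention. Second, $\phi(u)$ is only in $C^2_c$, not $C^\infty_c$; this is harmless, since it still lies in $\mathrm{Dom}((-\Delta)^s)$.
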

A direct application of
Theorem \ref{thm: C-C general phi} is a pointwise version of the classical Stroock-Varopoulos inequality on general manifolds, which reads as follows.
\begin{proposition}
\label{prop:SVI}
Let $(M,g)$ be a complete and stochastically complete Riemannian manifold, let
\(s\in(0,1)\), \(q >1 \), and \(u\in C^\infty_c(M)\). Then
\begin{equation*}
\int_{M} (|u|^{q-2}u) (-\Delta)^s u \,d\mu
\ge
\frac{4(q-1)}{q^2}
\int_{M} \big|(-\Delta)^{s/2}(|u|^{q/2})\big|^2\,d\mu .
\end{equation*}  
\end{proposition}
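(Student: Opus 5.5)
The plan is to derive the Stroock–Varopoulos inequality from Theorem \ref{thm: C-C general phi}, applied with the nonlinearity $\phi(t)=|t|^{q}$, combined with an elementary pointwise inequality linking $\phi''$ to the square of the derivative of $t\mapsto|t|^{q/2}$. Set $w:=|u|^{q/2}$ and $\psi(t):=|t|^{q/2}$, so that $w=\psi(u)$.

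\emph{Step 1: reduce the left-hand side.} By self-adjointness,
\begin{equation*}
\int_M |u|^{q-2}u\,(-\Delta)^s u\,d\mu=-\frac1q\int_M \phi'(u)\,\Lambda^{\hspace{-1pt} s} u\,d\mu ,
\qquad
\int_M |(-\Delta)^{s/2} w|^2\,d\mu=-\int_M w\,\Lambda^{\hspace{-1pt} s} w\,d\mu .
\end{equation*}
Theorem \ref{thm: C-C general phi} with $\phi(t)=|t|^q$ gives
\begin{equation*}
-\phi'(u)\Lambda^{\hspace{-1pt} s} u=-\Lambda^{\hspace{-1pt} s}\phi(u)+\beta_s\int_0^\infty \mathcal P^{(s)}_z\big(\phi''(U)|\ov\nabla U|^2\big)z^{1-2s}\,dz .
\end{equation*}
Integrate this over $M$. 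Stochastic completeness should give $\int_M\mathcal P^{(s)}_z f\,d\mu=\int_M f\,d\mu$, since the Poisson semigroup is subordinated to the heat semigroup, and also $\int_M\Lambda^{\hspace{-1pt} s}\phi(u)\,d\mu=0$. Therefore
\begin{equation*}
\int_M |u|^{q-2}u\,(-\Delta)^s u\,d\mu=\frac{\beta_s}{q}\int_{\ov M}\phi''(U)|\ov\nabla U|^2 z^{1-2s}\,dx\,dz .
\end{equation*}

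\emph{Step 2: do the same for $w$.} Apply Theorem \ref{thm: C-C general phi} with $\phi(t)=t^2$. This gives $\Lambda^{\hspace{-1pt} s}(w^2)-2w\Lambda^{\hspace{-1pt} s} w=2\beta_s\int_0^\infty\mathcal P^{(s)}_z(|\ov\nabla W|^2)z^{1-2s}\,dz$, where $W$ is the CS extension of $w$. Integrating as in Step 1 yields
\begin{equation*}
\int_M|(-\Delta)^{s/2}w|^2\,d\mu=\beta_s\int_{\ov M}|\ov\nabla W|^2z^{1-2s} .
\end{equation*}
This is the usual energy identity for the extension. The function $\psi(U)$ has trace $w$, and $W$ minimizes the weighted Dirichlet energy among functions with that trace. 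Hence
\begin{equation*}
\int_{\ov M}|\ov\nabla W|^2z^{1-2s}\le\int_{\ov M}\psi'(U)^2|\ov\nabla U|^2z^{1-2s} .
\end{equation*}

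\emph{Step 3: the pointwise inequality.} We have $\psi'(t)^2=\frac{q^2}{4}|t|^{q-2}$ and $\phi''(t)=q(q-1)|t|^{q-2}$, so
\begin{equation*}
\frac1q\,\phi''=\frac{4(q-1)}{q^2}\,(\psi')^2 .
\end{equation*}
Chaining Steps 1 and 2 then gives exactly the constant $\frac{4(q-1)}{q^2}$.

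\emph{Main obstacle: regularity and justification.} For $q<2$ the function $\phi$ is not $C^2$ at $0$, and $\psi$ is not Lipschitz there. I will regularize, for instance with $\phi_\varepsilon(t)=(t^2+\varepsilon^2)^{q/2}-\varepsilon^q$, which satisfies $\phi_\varepsilon(0)=0$, and with a matching $\psi_\varepsilon$ satisfying $\phi_\varepsilon''\ge\frac{4(q-1)}{q}(\psi_\varepsilon')^2$. This inequality can be checked directly: it is a Cauchy–Schwarz-type bound equivalent to defining $\psi_\varepsilon'=\sqrt{\tfrac{q}{4(q-1)}\phi_\varepsilon''}$. Then I pass to the limit $\varepsilon\to0$ using Fatou's lemma on the energy side.

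One also has to justify that the mass of $\Lambda^{\hspace{-1pt} s}\phi_\varepsilon(u)$ vanishes, and that the Poisson semigroup is mass-preserving on the relevant $L^1$ functions. Both rely on stochastic completeness and on the decay of $\phi_\varepsilon(u)\in C_c^2$. Finally, the trace of $\psi_\varepsilon(U)$ must lie in the correct space; this holds because $\psi_\varepsilon$ is Lipschitz with $\psi_\varepsilon(0)=0$.
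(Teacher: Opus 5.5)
Your proposal is correct and follows essentially the same route as the paper. It applies Theorem \ref{thm: C-C general phi} to the regularization $\phi_\ep(t)=(t^2+\ep^2)^{q/2}-\ep^q$, which is the paper's up to the factor $1/q$. It then integrates over $M$ using $\int_M \Lambda^{\hspace{-1pt} s}\phi_\ep(u)\,d\mu=0$ and the mass conservation of $\mathcal{P}^{(s)}_z$, both coming from stochastic completeness. Finally it applies the Dirichlet principle for the extension with a competitor built from $U$.

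The only difference is the order of limits. You keep $\ep>0$ during the comparison, via $\phi_\ep''=\frac{4(q-1)}{q}(\psi_\ep')^2$, and conclude by lower semicontinuity of the $H^s$ seminorm. The paper instead first lets $\ep\downarrow 0$ to obtain the pointwise identity \eqref{eq: pointwise SV}, and then uses $|U|^{q/2}$ as the competitor.

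One small correction: take $\psi_\ep$ even, i.e.\ $\psi_\ep(t)=\int_0^{|t|}\big(\tfrac{q}{4(q-1)}\phi_\ep''(\tau)\big)^{1/2}d\tau$. The positive square root as you wrote it makes $\psi_\ep$ odd, so $\psi_\ep(u)\to u|u|^{q/2-1}$ rather than $|u|^{q/2}$. Equivalently, use $|\psi_\ep(U)|$ as the competitor, which has the same weighted energy and trace $|\psi_\ep(u)|$.
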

Indeed, it follows by applying Theorem \ref{thm: C-C general phi} to $\phi_\ep(t)=\tfrac{1}{q}(|t|^2+\ep^2)^{q/2}-\tfrac{\ep^q}{q}$ and a standard approximation argument as $\ep \downarrow 0$, and by the identity 
\begin{equation*}
   \frac{q^2}{4}|U|^{q-2} |\widetilde \nabla U|^2 =  |\widetilde\nabla(|U|^{q/2})|^2 .
\end{equation*}

\begin{corollary}[Pointwise Stroock-Varopoulos identity] Let \(s\in(0,1)\),
\(q > 1\), $u\in C_c^\infty(M)$ and \(U\in \dot H^1_w(\widetilde M)\) be its CS
extension. Then
\begin{equation}\label{eq: pointwise SV}
\frac1q \Lambda^s(|u|^q)-|u|^{q-2}u \Lambda^s u
=
\frac{4(q-1)}{q^2} \beta_s
\int_0^\infty
\mathcal{P}_z^{(s)} \left(|\widetilde\nabla(|U|^{q/2})|^2(\cdot,z)\right) z^{1-2s}\,dz . 
\end{equation}
\end{corollary}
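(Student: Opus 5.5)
The plan is to apply Theorem \ref{thm: C-C general phi} with the regularized nonlinearity $\phi_\ep(t)=\tfrac1q(t^2+\ep^2)^{q/2}-\tfrac{\ep^q}{q}$. This function is $C^2$, convex, and satisfies $\phi_\ep(0)=0$, so the theorem applies also on non-compact $M$. Then I let $\ep\downarrow0$ on both sides.

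First I compute the ingredients:
\begin{equation*}
\phi_\ep'(t)=t(t^2+\ep^2)^{\frac{q-2}{2}}, \qquad \phi_\ep''(t)=(t^2+\ep^2)^{\frac{q-4}{2}}\big((q-1)t^2+\ep^2\big).
\end{equation*}
As $\ep\downarrow0$ these give, pointwise, $\phi_\ep(t)\to\tfrac1q|t|^q$, $\phi_\ep'(t)\to|t|^{q-2}t$, and $\phi_\ep''(t)\to(q-1)|t|^{q-2}$ for $t\neq0$.

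\textbf{Left-hand side.} Since $u\in C_c^\infty(M)$, the functions $\phi_\ep(u)$ are supported in $\mathrm{supp}\,u$ and converge uniformly to $\tfrac1q|u|^q$. Hence $\Lambda^s\phi_\ep(u)\to\tfrac1q\Lambda^s(|u|^q)$ in $\mathscr D'(M)$, because $\Lambda^s$ is symmetric and we may test against $\Lambda^s\varphi$. For $q\ge2$ the convergence is in $C^{1,\alpha}$, so it is pointwise as well. The term $\phi_\ep'(u)\Lambda^s u$ converges uniformly to $|u|^{q-2}u\,\Lambda^s u$, since $\phi'_\ep(u)\to |u|^{q-2}u$ uniformly for $q>1$.

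\textbf{Right-hand side.} On $\{U\neq0\}$ the integrand satisfies
\begin{equation*}
\phi_\ep''(U)|\widetilde\nabla U|^2\ \longrightarrow\ (q-1)|U|^{q-2}|\widetilde\nabla U|^2=\frac{4(q-1)}{q^2}\,|\widetilde\nabla(|U|^{q/2})|^2 .
\end{equation*}
The last equality uses the chain-rule identity recorded before the statement, valid where $U\neq0$.

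I expect the set $\{U=0\}$ and the passage to the limit under $\mathcal P^{(s)}_z$ and the $z$-integral to be the main obstacle.

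\begin{itemize}
\item \textbf{The zero set.} Where $U=0$ and $\widetilde\nabla U\ne0$, the set $\{U=0\}$ is locally a hypersurface in $\widetilde M$, hence of measure zero. Where $\widetilde\nabla U=0$, both sides vanish. So the right-hand integrand converges a.e. in $\widetilde M$, and the limit $|\widetilde\nabla(|U|^{q/2})|^2$ has no singular part since $|U|^{q/2}$ is locally Lipschitz for $q\ge2$.
\item \textbf{Monotone convergence for $q\ge2$.} In this range $\phi_\ep''(t)$ is nonnegative and nondecreasing as $\ep\downarrow0$. This lets me use monotone convergence, together with positivity of the kernel of $\mathcal P_z^{(s)}$, to pass to the limit in $\int_0^\infty\mathcal P_z^{(s)}(\cdot)z^{1-2s}dz$.
\item \textbf{The range $1<q<2$.} Here $\phi''_\ep$ is not monotone in $\ep$. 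Instead I use the pointwise bound $\phi''_\ep(t)\le C_q\,(t^2+\ep^2)^{\frac{q-2}{2}}$. I then apply Fatou's lemma on the right-hand side to get one inequality. The distributional convergence of the left-hand side shows the right-hand side has bounded mass, which gives finiteness of the limit.
\item \textbf{Equality for $1<q<2$.} To upgrade Fatou to equality, I split off the region $\{|U|<\delta\}$. I bound its contribution using the identity $\phi''_\ep(U)|\widetilde\nabla U|^2=\widetilde{\dive}$-type terms from the extension equation, integrated against positive test functions; integrating $\phi_\ep''(U)\widetilde\nabla U=\widetilde\nabla\phi'_\ep(U)$ by parts makes this contribution $o(1)$ as $\delta\to0$ uniformly in $\ep$.
\end{itemize}

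This yields \eqref{eq: pointwise SV} in $\mathscr D'(M)$. Since both sides are continuous functions, it then holds pointwise.
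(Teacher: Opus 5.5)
Your strategy is the same as the paper's: apply Theorem~\ref{thm: C-C general phi} to $\phi_\varepsilon(t)=\frac1q(t^2+\varepsilon^2)^{q/2}-\frac{\varepsilon^q}{q}$ and let $\varepsilon\downarrow0$. The paper only calls this ``a standard approximation argument''. Your handling of the left-hand side and of the a.e.\ convergence of the integrand is fine. However, one justification you give is false: for $q>2$, $\phi_\varepsilon''(t)$ is \emph{not} nondecreasing as $\varepsilon\downarrow0$. With $a=\varepsilon^2$, $b=t^2$,
\[
\partial_a\Big[(a+b)^{\frac{q-4}{2}}\big((q-1)b+a\big)\Big]=\frac{q-2}{2}\,(a+b)^{\frac{q-6}{2}}\big((q-3)b+a\big).
\]
So for $q\ge3$ it decreases as $\varepsilon\downarrow0$ (e.g.\ $q=4$ gives $\phi_\varepsilon''(t)=3t^2+\varepsilon^2$), and for $2<q<3$ it is not monotone. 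Your monotone convergence step therefore does not apply. The repair is easy. For $q\ge2$ you do not need $\varepsilon$ at all, since $\phi(t)=\frac1q|t|^q$ is already $C^2$ with $\phi(0)=0$ and $\phi''(t)=(q-1)|t|^{q-2}$; Theorem~\ref{thm: C-C general phi} then gives the identity directly and pointwise. Alternatively, $|U|\le\|u\|_{L^\infty}$ makes $\phi_\varepsilon''(U)$ bounded uniformly in $\varepsilon\le1$. Dominated convergence then applies, because $\beta_s\int_0^\infty\mathcal P^{(s)}_z(|\widetilde\nabla U|^2)z^{1-2s}\,dz=\Gamma_1(u)$ is finite.

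For $1<q<2$ your Fatou step is correct, but the integration by parts on $\{|U|<\delta\}$ is an unnecessary detour. As sketched, it would also still require controlling boundary terms, e.g.\ on $\{U=\pm\delta\}$ and at $z=0$. Since $(q-1)t^2+\varepsilon^2\le t^2+\varepsilon^2$ and $q<2$, you have $0\le\phi_\varepsilon''(t)\le(t^2+\varepsilon^2)^{\frac{q-2}{2}}\le|t|^{q-2}$. Hence the integrand is dominated a.e.\ by $\frac1{q-1}$ times its own limit; on $\{U=0\}$ this holds because $\widetilde\nabla U=0$ a.e.\ there. Fatou has just shown this limit is integrable against $\Phi z^{1-2s}$ for $\varphi\ge0$, so dominated convergence gives equality at once. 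Finally, your last sentence overreaches. For $1<q<2s$, $\Lambda^s(|u|^q)$ is in general unbounded near nondegenerate zeros of $u$. In that range the identity holds in $\mathscr D'(M)$ (hence a.e.), not as an identity between continuous functions.
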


\medskip

\noindent\textbf{$\bullet$ Exact remainder in the Kato inequality.} By virtue of Theorem \ref{thm: C-C general phi}, we shall also deduce an explicit remainder in the Kato inequality for the fractional Laplacian, originally due to Chen and Véron \cite{KatoFrac}, with later generalizations \cite{AbatangeloKato}. It is remarkable that we are able to obtain a pointwise nonnegative (in the sense of distribution) remainder in the Kato inequality for a nonlocal operator, which resembles that of the classical case.

 \begin{theorem}[Exact remainder in the Kato inequality]\label{thm: Kato}  Let $s\in (0,1)$, $u\in C_c^\infty(M)$, and $U \in {\widehat H}^1_w (\widetilde M)$ be the CS extension of $u$. Then, for all $\varphi\in{\rm Lip}_c(M)$, we have  
\begin{equation}\label{eq: remainder Kato}
  \int_M \Big(\Lambda^{\hspace{-1pt} s}  |u| - {\rm sgn}(u) \Lambda^{\hspace{-1pt} s} u\Big)\varphi \,  d\mu = 2 \beta_s\int_{\{U=0\}}|\ov \nabla U|\Phi z^{1-2s}  d\mathcal{H}^n + \int_{\{u=0\}} |\Lambda^{\hspace{-1pt} s} u| \varphi \, d\mu , 
 \end{equation}
 where ${\rm sgn}(u)$ is the sign of $u$ (with value $0$ at $u=0$) and $\Phi=\mathcal{P}_z^{(s)}\varphi$. 
\end{theorem}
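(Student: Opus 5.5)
We plan to derive \eqref{eq: remainder Kato} from Theorem \ref{thm: C-C general phi} by regularizing the absolute value. Take $\phi_\ep(t)=\sqrt{t^2+\ep^2}-\ep$, which is $C^2$, convex, and has $\phi_\ep(0)=0$, so it is admissible also for non-compact $M$. Theorem \ref{thm: C-C general phi} gives, pointwise,
\begin{equation*}
\Lambda^{\hspace{-1pt} s}(\phi_\ep(u))-\phi_\ep'(u)\Lambda^{\hspace{-1pt} s}u=\beta_s\int_0^\infty \mathcal{P}_z^{(s)}\Big(\frac{\ep^2}{(U^2+\ep^2)^{3/2}}|\ov\nabla U|^2\Big)z^{1-2s}\,dz .
\end{equation*}
We test against $\varphi\in{\rm Lip}_c(M)$ and use the symmetry of $\mathcal{P}_z^{(s)}$ to move it onto $\varphi$. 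The right-hand side then becomes $\beta_s\int_{\ov M}\rho_\ep(U)|\ov\nabla U|^2\Phi\, z^{1-2s}\,d\mu\,dz$, where $\rho_\ep(t)=\ep^2(t^2+\ep^2)^{-3/2}$ and $\Phi=\mathcal{P}_z^{(s)}\varphi$. Note that $\int_\R\rho_\ep=2$ and $\rho_\ep\to 2\delta_0$.

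\textbf{Left-hand side.} We have $\phi_\ep(u)\to|u|$ in $H^s$. Hence $\int\Lambda^{\hspace{-1pt} s}(\phi_\ep(u))\varphi=\int\phi_\ep(u)\Lambda^{\hspace{-1pt} s}\varphi\to\int|u|\Lambda^{\hspace{-1pt} s}\varphi$; on noncompact $M$ this step needs $\Lambda^{\hspace{-1pt} s}\varphi$ to have suitable decay. Next, $\phi_\ep'(u)=u/\sqrt{u^2+\ep^2}\to{\rm sgn}(u)$ pointwise, with bound $1$. Since $\Lambda^{\hspace{-1pt} s}u$ is smooth and integrable against $\varphi$, dominated convergence gives $\int\phi_\ep'(u)\Lambda^{\hspace{-1pt} s}u\,\varphi\to\int{\rm sgn}(u)\Lambda^{\hspace{-1pt} s}u\,\varphi$.

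\textbf{Right-hand side.} The main work is the limit $\varepsilon\downarrow 0$. Apply the coarea formula to $U$ on $\ov M$ with weight $|\ov\nabla U|\Phi z^{1-2s}$:
\begin{equation*}
\int_{\ov M}\rho_\ep(U)|\ov\nabla U|^2\Phi z^{1-2s}=\int_\R\rho_\ep(t)\Big(\int_{\{U=t\}}|\ov\nabla U|\Phi z^{1-2s}\,d\mathcal{H}^n\Big)dt .
\end{equation*}
If the function $g(t)=\int_{\{U=t\}}|\ov\nabla U|\Phi z^{1-2s}d\mathcal{H}^n$ were continuous at $t=0$, this would converge to $2g(0)$, giving the first term. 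The obstruction is the boundary $z=0$. Near $\{u=0\}\times\{0\}$ the level sets $\{U=t\}$ accumulate at the boundary, and $z^{1-2s}|\ov\nabla U|$ behaves like $z^{1-2s}|\partial_zU|\approx|\Lambda^{\hspace{-1pt} s}u|/\beta_s$-type quantities there. This produces an additional boundary mass, so $g$ is not continuous at $0$ uniformly up to $z=0$.

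To capture this mass we split $\ov M$ into $\{z>\delta\}$ and $\{z<\delta\}$. On $\{z>\delta\}$, $U$ is smooth up to the cut, and on the regular set the level sets vary nicely. By Sard and coarea, $g_\delta(t)\to g_\delta(0)$ for a.e. choice of $\delta$, possibly after an averaging argument in $t$ with the kernel $\rho_\ep$. Letting $\delta\downarrow0$ then yields $2\beta_s\int_{\{U=0\}}|\ov\nabla U|\Phi z^{1-2s}$.

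For the strip $\{z<\delta\}$ we expand $U(x,z)=u(x)+c_s z^{2s}\Lambda^{\hspace{-1pt} s}u(x)+O(z^2)$, with $\beta_s c_s\,2s=1$ under the paper's normalization. Substituting $U$ into $\rho_\ep(U)\,z^{1-2s}(\partial_zU)^2$ and changing variables $\tau=c_sz^{2s}\Lambda^{\hspace{-1pt} s}u$ at points where $u=0$ should give, as $\ep\to0$ then $\delta\to0$, exactly
\begin{equation*}
\int_{\{u=0\}}|\Lambda^{\hspace{-1pt} s}u|\varphi\,d\mu ,
\end{equation*}
using $\int_0^\infty\rho_\ep=1$ on a half-line. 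Off $\{u=0\}$ the strip contribution vanishes because $\rho_\ep(u)\to0$.

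I expect this boundary-layer computation to be the hardest step. It requires uniform control of the $O(z^2)$ error and of the tangential gradient $|\nabla U|^2z^{1-2s}$, which must be shown to give no mass. It also requires handling the set where both $u$ and $\Lambda^{\hspace{-1pt} s}u$ vanish.
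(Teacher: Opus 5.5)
Your reduction (regularizing with $\phi_\ep$, applying Theorem \ref{thm: C-C general phi}, moving $\mathcal{P}_z^{(s)}$ onto $\varphi$, coarea) is the same as the paper's. Your diagnosis that $g$ jumps at $t=0$ because of the boundary $z=0$ is correct. Your one-dimensional heuristic at a point $x\in\{u=0\}$ with $\Lambda^s u(x)\neq 0$ (substituting $\tau=c_s z^{2s}\Lambda^s u(x)$ and using $\int_0^\infty\rho_\ep=1$) does produce $|\Lambda^s u(x)|\varphi(x)$.

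However, the $\ep\downarrow 0$ limit of the right-hand side, which is the whole content of the theorem, is not proved, and two of its steps fail as written.

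First, Sard's theorem only says that a.e.\ $t$ is a regular value. It gives no continuity of $g_\delta$ at the specific level $t=0$, which may well be a critical value of $U$. Continuity there has to come from the equation. Since $\mathcal{L}_s U=0$, the divergence theorem for $z^{1-2s}\Phi\widetilde\nabla U$ on $\{U>t\}\cap\{z>\delta\}$ rewrites $\beta_s g_\delta(t)$ as $-\beta_s\int_{\{U>t,\,z>\delta\}}\widetilde\nabla U\cdot\widetilde\nabla\Phi\, z^{1-2s}$ minus the flux $\beta_s\int_{\{U(\cdot,\delta)>t\}}\delta^{1-2s}\partial_z U\,\Phi\,d\mu$. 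The first term is continuous in $t$ because $\{U=t\}$ is Lebesgue-null; the flux is continuous at $t=0$ for a.e.\ $\delta$.

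Second, the strip integrand contains $\rho_\ep(U)$, not $\rho_\ep(u)$. Above points with $u(x)\Lambda^s u(x)<0$ the nodal set $\{U=0\}$ enters the strip at height $z\approx(|u(x)|/(c_s|\Lambda^s u(x)|))^{1/(2s)}$. So for fixed $\delta$ the strip also contributes $2\beta_s\int_{\{U=0\}\cap\{z<\delta\}}|\widetilde\nabla U|\Phi z^{1-2s}\,d\mathcal{H}^n$. This term disappears only as $\delta\downarrow 0$. Exchanging the two limits needs estimates uniform in $\ep$ near $\{u=0\}\times\{0\}$, namely:
\begin{itemize}
\item for points with $0<|u(x)|\lesssim\ep$;
\item for the tangential term $\rho_\ep(U)|\nabla U|^2 z^{1-2s}$, which is a priori only bounded by $C z^{1-2s}/\ep$ where $\nabla u\neq 0$;
\item on $\{u=\Lambda^s u=0\}$.
\end{itemize}
You flag these as the hardest step. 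In fact they are where the entire difficulty lies, and nothing in the proposal controls them.

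The missing idea is not to stop at $z=\delta$: apply the divergence identity above on all of $\{U>t\}$ and let $\delta\downarrow 0$ inside it, as the paper does. Two facts make this limit explicit. First, $\beta_s z^{1-2s}\partial_z U(\cdot,z)\to\Lambda^s u$ in $L^2(M)$. Second, the leading-order expansion of $U$, used only pointwise, gives $\mathbbm{1}_{\{U(\cdot,z)>t\}\cap\{\Lambda^s u\neq 0\}}\to\mathbbm{1}_{\{u>t\}}+\mathbbm{1}_{\{u=t\}\cap\{\Lambda^s u>0\}}$ in $L^1_{\rm loc}(M)$. Together they yield, for every $t$, the exact identity $\beta_s g(t)=H(t)+G(t)+R(t)$, where
\begin{itemize}
\item $H(t)=-\beta_s\int_{\{U>t\}}\widetilde\nabla U\cdot\widetilde\nabla\Phi\,z^{1-2s}$ is continuous;
\item $G(t)=-\int_{\{u>t\}}\Lambda^s u\,\varphi\,d\mu$;
\item $R(t)=-\int_{\{u=t\}\cap\{\Lambda^s u>0\}}\Lambda^s u\,\varphi\,d\mu$ vanishes for a.e.\ $t$.
\end{itemize}
Then $\int\phi_\ep'' H\,dt\to 2H(0)$. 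By Fubini, $\int\phi_\ep'' G\,dt=-\int_M\Lambda^s u\,\varphi\,(\phi_\ep'(u)+1)\,d\mu$ exactly. Inserting $H(0)=\beta_s g(0)-G(0)-R(0)$ gives $2\beta_s g(0)+\int_{\{u=0\}}|\Lambda^s u|\varphi\,d\mu$. The boundary mass you are trying to extract from a boundary-layer analysis is simply the jump of the explicit function $G+R$ at $t=0$. So no uniform asymptotics of $U$ near $z=0$ are needed, and the same identity also settles your interior continuity step.
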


The identity \eqref{eq: remainder Kato} should be thought of as the fractional analogue of the classical Kato formula for the Laplacian, in which the defect between $\Delta |u| - {\rm sgn}(u)\Delta u$ is concentrated on the zero set $\{u=0\}$. In the fractional setting, this defect decomposes into two nonnegative contributions: one supported on the zero set of the extension $U$, and one supported on the zero set of $u$. 

Thus, the failure of commutation between $\Lambda^{\hspace{-1pt} s}$ and the absolute value is encoded not only by the zero set of $u$, but also by the geometry of the zero level set of its CS extension. Lastly, note that formally the second term in \eqref{eq: remainder Kato} vanishes in the classical limit $s \uparrow 1$, since $\Delta u=0$ almost everywhere on $\{u=0\}$.

 \section{Extension framework and the Duhamel-type formula}\label{sec 2}

We fix the following notations that will be used throughout the paper. 

\begin{center}
\begin{tabular}{@{}p{0.10\linewidth}p{0.72\linewidth}@{}}
\multicolumn{2}{@{}l@{}}{}\\
$(M,g)$ & a smooth, complete Riemannian manifold without boundary. \\
$d\mu$ & Riemannian volume measure of $M$. \\
$\nabla$ & Riemannian gradient on $M$. \\
$\Delta$ & the Laplace-Beltrami operator of $M$.  \\ 
$\widetilde M $ & extended manifold $\widetilde M:=M\times (0,\infty)$ with the product metric.  \\ 
$\ov \nabla $ & gradient $\ov \nabla := (\nabla, \partial_z)$ on $\ov M$.  \\ 
$\ov \dive $ & divergence operator on $\ov M$.  \\ 
$\mathcal{L}_s$ & degenerate elliptic operator $\mathcal{L}_s U := \ov \dive (z^{1-2s} \ov \nabla U)$ on $\ov M$. 
\end{tabular}
\end{center}

\vspace{12pt}

  We refer to \cite{GrygBook} and \cite[Section 2.6]{Spec1} and the references therein for an introduction to the spectral theory of the fractional Laplacian on general spaces. Let $\sigma(-\Delta) \subset [0,\infty)$ be the spectrum of $-\Delta$ and ${E_\lambda}$ be its spectral resolvent. For $s\in (0,1)$, we have
\begin{equation*}
    {\rm Dom}((-\Delta)^{s}) = \Big\{u \in L^2(M) \, :  \int_{\sigma(-\Delta)} \lambda^{2s} \, d \| E_\lambda u \|^2 < \infty \Big\} , 
\end{equation*}
and 
\begin{align*}
    H^s(M) := \Big\{u \in L^2(M) \, :  \int_{\sigma(-\Delta)} (1+\lambda^{s}) \, d\| E_\lambda u \|^2 < \infty \Big\} . 
\end{align*}

 The Caffarelli-Silvestre extension admits a very general operator-theoretic formulation originally due to Stinga and Torrea \cite{StingaTorrea}. Indeed, even though in the first few pages in \cite{StingaTorrea} the result is presented for (second-order, nonnegative, self-adjoint) differential operators densely defined on a bounded domain of $\R^n$, from \cite[Section 2]{StingaTorrea} onwards the proof only uses the spectral theorem, so the result applies to general non-negative normal operators. This result has now been extended even to the case of ambient Banach spaces \cite{CSbanach1, CSbanach2}. 
 
  In our case, since $M$ is a complete Riemannian manifold, then the Laplacian $-\Delta$ is essentially self-adjoint on $C_c^\infty(M)$ and thus admits a canonical nonnegative self-adjoint realization. Applying the result by Stinga and Torrea in this case, we obtain that the Caffarelli-Silvestre extension realizes the spectral fractional Laplacian $(-\Delta)^{s}$ as the corresponding Dirichlet-to-Neumann operator, which we describe next.

Define the weighted homogeneous $H^1$-space
\begin{equation*}
        {\widehat H}^1_w (\widetilde M) := \Big\{ U \in L^2_{\rm loc}(\ov M) \, : \, \medint\int_{\ov M} |\ov \nabla U|^2 z^{1-2s} d\mu dz  <+\infty  \Big\}.
\end{equation*} 
Given $u\in H^s(M)$, we denote by $U$ its Caffarelli-Silvestre (CS) extension to $\widetilde M$ given by 
\begin{equation*}
    U(x,z) = \mathcal{P}_z^{(s)} u (x) :=  \int_{M} \mathcal{P}_z^{(s)}(x,y) u(y) \, d\mu(y),  
\end{equation*}
where the fractional Poisson kernel $ \mathcal{P}_z^{(s)} : M \times M \to (0, \infty)$ of $M$ can be expressed as
\begin{equation}\label{eq: frac Poisson def}
    \mathcal{P}_z^{(s)}(x,y)= \frac{z^{2s}}{4^s\Gamma(s)} \int_{0}^{\infty} H_M(x,y,t)  e^{-\frac{z^2}{4t}}\frac{dt}{t^{1+s}} , 
\end{equation} 
and $H_M$ is the heat kernel of $M$ (see \cite[Section 9.1]{GrygBook}). 

By \cite{StingaTorrea}, see also \cite{CFSfrac} for the case of closed manifolds, this extension $U$ is the unique function $U\in {\widehat H}^1_w (\widetilde M)$ solution of
\begin{equation*}
    \begin{cases}  \widetilde{ {\rm div}}(z^{1-2s} \widetilde \nabla U) = 0  &   \mbox{in } \widetilde{M} ,  \\ U(\cdot,0) = u &  \mbox{on } M . \end{cases}
\end{equation*}

Moreover, the extension realizes the spectral fractional Laplacian as 
\[
 \Lambda^{\hspace{-1pt} s} u  =  \beta_s \lim_{z\downarrow 0} z^{1-2s} \partial_z U(\cdot,z)  \qquad \mbox{in } \mathscr{D}'(M) , 
\]
where 
\begin{equation}\label{eq: beta def}
    \beta_s := \frac{2^{2s-1} \Gamma(s)}{\Gamma(1-s)} > 0 . 
\end{equation}

\begin{remark}\label{rem: which manifolds}
    In the discussion above, we have focused exclusively on the spectral fractional Laplacian and its extension formulation by Stinga and Torrea. Hence, every result in this work holds on a general complete Riemannian manifold $M$ for the spectral fractional Laplacian. A separate issue is whether this operator agrees with the fractional Laplacian defined through the principal value of a singular integral, as in the classical setting of $\R^n$. This identification holds in general in many situations of interest, for example, when 
    \begin{itemize}
        \item $M=\R^n$ by \cite{Hitguide}. 
        \item $M=\mathbb{H}^n$ (the hyperbolic space) by \cite{flapHyperbolic}.
        \item $M$ is a closed Riemannian manifold without boundary by \cite{CFSfrac}. 
    \end{itemize} 
    Moreover, the two notions agree on smooth, compactly supported functions when $M$ is stochastically complete \cite{CG24}.
\end{remark}

\subsection{Proof of the Duhamel-type formula}

\begin{proof}[Proof of Proposition \ref{prop: rep formula solutions}] Let $\varphi \in C_c^\infty(M)$ and $\Phi \in {\widehat H}^1_w (\widetilde M) $ be its Caffarelli–Silvestre extension. Fix $p \in M$ and let $\eta_k \in C_c^\infty(M)$ be a standard cutoff with $\eta_k =1$ in $B_k(p)$ and $\eta_k =0$ in $ M \setminus B_{2k}(p)$. Let also $0<\ep<R$. Multiplying the equation $  \mathcal{L}_s V = F$ by $\Phi \eta_k $ and integrating over $M\times (\ep, R)$ gives 
\begin{equation}\label{eq: integrated}
    \int_{M\times (\ep, R)} (\mathcal{L}_s V ) (\Phi \eta_k) \, d\mu dz = \int_{M\times (\ep, R)} F \Phi \eta_k \, d\mu dz . 
\end{equation}
Now we want to integrate by parts the l.h.s. twice. We have 
{ \allowdisplaybreaks
\begin{align*}
   & \int_{M\times (\ep, R)}  (\mathcal{L}_{s}V) (\Phi \eta_k) \, d\mu dz \\ & = - \int_{M\times (\ep, R)} \ov \nabla V \cdot \ov \nabla(\Phi \eta_k) z^{1-2s} d\mu dz + \int_{M} \Big[ z^{1-2s}  \Phi \partial_z V \eta_k   \Big]_{z=\ep}^{z=R}  d\mu \\ & = - \int_{M\times (\ep, R)} \ov \nabla V \cdot \ov \nabla \Phi \eta_k z^{1-2s} d\mu dz - \int_{M\times (\ep, R)} \nabla V \cdot   \nabla \eta_k \Phi z^{1-2s} d\mu dz + \int_{M} \Big[ z^{1-2s}  \Phi \partial_z V \eta_k   \Big]_{z=\ep}^{z=R}  d\mu \\ & =  \int_{M\times (\ep, R)}  V \widetilde \dive ( \ov \nabla \Phi \eta_k z^{1-2s} ) d\mu dz - \int_{M} \Big[ z^{1-2s}  V \partial_z \Phi \eta_k  \Big]_{z=\ep}^{z=R}  d\mu - \int_{M\times (\ep, R)} \nabla V \cdot   \nabla \eta_k \Phi z^{1-2s} d\mu dz \\ & \hspace{12pt} + \int_{M} \Big[ z^{1-2s}  \Phi \partial_z V \eta_k   \Big]_{z=\ep}^{z=R}  d\mu \\ &= \int_{M\times (\ep, R)} V \nabla \Phi \cdot \nabla \eta_k z^{1-2s} d\mu dz - \int_{M\times (\ep, R)} \nabla V \cdot   \nabla \eta_k \Phi z^{1-2s} d\mu dz - \int_{M} \Big[ z^{1-2s}  V \partial_z \Phi \eta_k  \Big]_{z=\ep}^{z=R}  d\mu \\ &  \hspace{12pt} + \int_{M} \Big[ z^{1-2s}  \Phi \partial_z V \eta_k   \Big]_{z=\ep}^{z=R}  d\mu \\ &= \int_{M\times (\ep, R)} V \nabla \Phi \cdot \nabla \eta_k z^{1-2s} d\mu dz - \int_{M\times (\ep, R)} \nabla V \cdot   \nabla \eta_k \Phi z^{1-2s} d\mu dz \\ &  \hspace{12pt} + R^{1-2s} \int_M (\Phi \partial_z V - V \partial_z \Phi)(\cdot, R)\eta_k \, d\mu - \ep^{1-2s} \int_M (\Phi \partial_z V - V \partial_z \Phi)(\cdot, \ep)\eta_k \, d\mu 
   \end{align*}
   }
   where we have used that $\mathcal{L}_s \Phi = 0 $ in the second-last equality. 

   We now show that the bulk terms vanish as $k\to \infty$; we shall treat only the first one, as the second is analogous. We have
   \begin{align*}
       \bigg|\int_{M\times (\ep, R)} V \nabla \Phi \cdot \nabla \eta_k z^{1-2s} d\mu dz\bigg|&\leq \frac{C_{\ep,R}}{k}\int_{M\times (\ep, R)} |V| |\nabla \Phi|   z^{\frac{1-2s}{2}} d\mu dz  \\
       &\leq\frac{C_{\ep,R}}{k} [ V ] _{\widehat{H}^1_w(\ov M)} [\Phi]_{\widehat{H}^1_w(\ov M)}\to 0, \quad{\rm as}\,k\to\infty,
   \end{align*}
   where we used that $z\in (\ep,R)$, Cauchy-Schwarz and (the proof of) Lemma \ref{lem: conv V}.

   Hence, letting $k\to \infty$ in \eqref{eq: integrated} gives
\begin{align*}
     R^{1-2s} \int_M (\Phi \partial_z V - V \partial_z \Phi)(\cdot, R) \, d\mu - \ep^{1-2s} \int_M (\Phi \partial_z V - V \partial_z \Phi)(\cdot, \ep) \, d\mu = \int_{M\times (\ep, R)} F \Phi \, d\mu dz  .   
\end{align*}

We claim that, letting $\ep \downarrow 0 $ and $R \uparrow \infty$, the two terms on the l.h.s. involving $V \partial_z \Phi$ vanish. Indeed, since $ \varphi \in C_c^\infty(M)$ we have in particular $ \varphi \in {\rm Dom}((-\Delta)^s)$. Then, by \cite[Theorem 1.1]{Stinga2010} (or by the main results in \cite{CSbanach1, CSbanach2}) we get $   \lim_{\ep\downarrow 0} \ep^{1-2s} \partial_z \Phi(\cdot, \ep) =  \beta_s^{-1}\Lambda^{\hspace{-1pt} s}  \varphi$ in $L^2(M)$. In particular $\ep^{1-2s} \partial_z \Phi(\cdot, \ep)$ is uniformly bounded in $L^2(M)$ as $\ep \downarrow 0$. Moreover, by Lemma \ref{lem: conv V} we have that $\lim_{\ep\downarrow 0} V(\cdot, \ep)=0 $ in $L^2(M)$. Thus 
\begin{equation}\label{eq: conv Stinga}
    \ep^{1-2s} \int_M  V (\cdot, \ep)  \partial_z \Phi (\cdot, \ep) \, d\mu \to 0 ,  \quad \mbox{as } \ep \downarrow 0 .  
\end{equation}
By Cauchy-Schwartz, \eqref{eq: decay d_z U} with $p=2$ and (the proof of) Lemma \ref{lem: conv V}
\begin{align*}
    R^{1-2s} \int_M  |V|(\cdot,R) |\partial_z \Phi| (\cdot, R) \, d\mu&\leq R^{1-2s}\|V(\cdot,R)\|_{L^2(M)}\|\partial_z \Phi (\cdot, R)\|_{L^2(M)} \\
    &\leq C_s R^{s}[V]_{\widehat{H}^1_w(\ov M)} \cdot R^{-2s} \|\varphi\|_{L^2(M)} , 
\end{align*}
and the latter tends to zero when $R\uparrow \infty$.

Hence, letting $\ep \downarrow 0 $ and $R \uparrow \infty$ gives 
\begin{equation*}
     \lim_{z \uparrow \infty} \int_{M} (z^{1-2s} \partial_z V )(\cdot, z) \varphi \, d\mu  - \lim_{z\downarrow 0} \int_{M} (z^{1-2s} \partial_z V )(\cdot, z) \varphi \, d\mu = \int_{\ov M} F\Phi \, d\mu dz .  
\end{equation*}

We now compute the r.h.s. of this equality. Since the fractional Poisson kernel is symmetric $\mathcal{P}_z^{(s)}(x,y)=\mathcal{P}_z^{(s)}(y,x)$, by Fubini's theorem
\begin{align*}
    \int_{\widetilde{M}} F \Phi \, d\mu dz &= \int_{\widetilde{M}} F (\mathcal{P}_z^{(s)} \varphi) \, d\mu dz  =  \int_{\widetilde{M}} (\mathcal{P}_z^{(s)} F ) \varphi \, d\mu dz =  \int_{M} \left( \int_0^\infty \mathcal{P}_z^{(s)} F (\cdot, z) \, dz \right) \varphi \, d\mu . 
\end{align*}
Combining this identity with the one for the l.h.s. and comparing the coefficients of $\varphi$, we obtain 
 \begin{equation*}
       \lim_{z \uparrow \infty} z^{1-2s} \partial_z V (\cdot, z) - \lim_{z\downarrow 0} z^{1-2s} \partial_z V (\cdot, z)  = \int_0^\infty \mathcal{P}_z^{(s)} F (\cdot, z) \, dz  \qquad \mbox{in } \mathscr{D}'(M) .  
    \end{equation*}
     This concludes the proof.
\end{proof}

\section{Proof of the main results}

All proofs in this section follow the same scheme: compare the extension of the target quantity with the corresponding quantity built from extensions, identify the forcing term in the inhomogeneous extension equation, and apply Proposition \ref{prop: rep formula solutions}. 

\subsection{Pointwise fractional Leibniz rule: proof of Theorem \ref{thm: baby Boch Gamma_1}}

\begin{proof}[Proof of Theorem \ref{thm: baby Boch Gamma_1}] Assume first that $u,v \in C^\infty_c(M)$, we will later argue by density to obtain the general case. Let $P$ be the CS extension of $uv \in H^s(M)$. We have 
     \begin{align*}
         \Lambda^{\hspace{-1pt} s} (u v ) & =  \beta_s z^{1-2s} \partial_z P  (\cdot, 0^+)  \\ & =  \beta_s z^{1-2s} \partial_z (U V )  (\cdot, 0^+)  + \beta_s z^{1-2s} \partial_z (P-UV)  (\cdot, 0^+) \\ &=   \beta_s  z^{1-2s} U  \partial_z  V  (\cdot, 0^+) +  \beta_s  z^{1-2s} V  \partial_z  U  (\cdot, 0^+)  + \beta_s z^{1-2s} \partial_z (P-UV)  (\cdot, 0^+) \\ &= u  \Lambda^{\hspace{-1pt} s} v + v  \Lambda^{\hspace{-1pt} s} u  + \beta_s z^{1-2s} \partial_z (P-UV)  (\cdot, 0^+) , 
     \end{align*}
that is 
\begin{equation}\label{eq: Gamma_1 D equation}
    \Gamma_1(u,v) = \frac{\beta_s}{2}   z^{1-2s} \partial_z (P-UV)  (\cdot, 0^+) . 
\end{equation}

Since $ \mathcal{L}_{s} U=0$ and $ \mathcal{L}_{s} V=0$ in $\widetilde M$, it is straightforward that 
\begin{equation*}
     \mathcal{L}_{s} (UV) = U  \mathcal{L}_{s} V  + V  \mathcal{L}_{s} U   + 2 z^{1-2s} ( \widetilde \nabla U \cdot \widetilde \nabla V ) = 2 z^{1-2s} ( \widetilde \nabla U \cdot \widetilde \nabla V ) .
\end{equation*}
Hence, the difference $D:=P-UV \in \widehat{H}^1_w(\ov M)$ solves
    \begin{equation*}
    \begin{cases} 
      \mathcal{L}_{s} D = - 2 z^{1-2s} ( \widetilde \nabla U \cdot \widetilde \nabla V )   &   \mbox{in } \ov M ,  \\ D(\cdot,0) =0 &  \mbox{on } M   . 
    \end{cases}
\end{equation*}
 By Proposition \ref{prop: rep formula solutions}, we get 
\begin{equation*}
          z^{1-2s} \partial_z D (\cdot, 0^+) -   \lim_{z \uparrow \infty } z^{1-2s} \partial_z D (\cdot, z) = 2 \int_0^\infty \mathcal{P}_z^{(s)}  ( \widetilde \nabla U (\cdot, z) \cdot \widetilde \nabla V (\cdot, z) ) z^{1-2s}  dz ,  
    \end{equation*}
    which gives (recalling \eqref{eq: Gamma_1 D equation})
    \begin{equation*}
     \Gamma_1(u,v) -   \frac{\beta_s}{2} \lim_{z \uparrow \infty } z^{1-2s} \partial_z D (\cdot, z) = \beta_s \int_0^\infty \mathcal{P}_z^{(s)}  ( \widetilde \nabla U (\cdot, z) \cdot \widetilde \nabla V (\cdot, z) ) z^{1-2s}  dz .
 \end{equation*}

We are left to show that the boundary term at infinity vanishes, but this easily follows by \eqref{eq: decay d_z U} with $p=1$ applied three times (to $P, U$, and $V$, together with $U, V \in L^\infty(\ov M)$),
\begin{equation*}
    z^{1-2s} \partial_z D = z^{1-2s} \partial_z P - z^{1-2s} (\partial_z U)V -  z^{1-2s} (\partial_z V)U \to 0 , \quad \mbox{in } L^1(M) \mbox{ as } z\uparrow \infty. 
\end{equation*}
This concludes the proof in the case of $u,v \in C^\infty_c(M)$.

 The general case follows by density using Lemma \ref{lem: density in Hs}. Let $u, v \in H^s(M)$ and $u_k, v_k \in C_c^\infty(M)$ be such that $u_k \to u$ and $v_k \to v$ in $H^s(M)$. Let also $U_k $ and $ V_k$ be their CS extensions. By the first part of the proof 
 \begin{equation*}
   \Lambda^{\hspace{-1pt} s}(u_k v_k)-u_k\Lambda^{\hspace{-1pt} s} v_k-v_k\Lambda^{\hspace{-1pt} s} u_k  =  2 \Gamma_1(u_k,v_k) = 2\beta_s \int_0^\infty \mathcal{P}_z^{(s)}  ( \widetilde \nabla U_k (\cdot, z) \cdot \widetilde \nabla V_k (\cdot, z) ) z^{1-2s}  dz.
 \end{equation*}
 Multiplying this equation by $\varphi \in C_c^\infty(M)$, integrating over $M$, and using that $\Lambda^{\hspace{-1pt} s}$ is self-adjoint for every $s\in (0,1)$ gives 
      \begin{align*}
   \langle u_k v_k,  \Lambda^{\hspace{-1pt} s} \varphi \rangle_{L^2}- \langle \Lambda^{\hspace{-1pt} s/2}  (u_k \varphi),  \Lambda^{\hspace{-1pt} s/2} v_k \rangle_{L^2} -\langle \Lambda^{\hspace{-1pt} s/2} (v_k \varphi),&  \Lambda^{\hspace{-1pt} s/2} u_k \rangle_{L^2}    \\ & = 2\beta_s \int_M \int_0^\infty \mathcal{P}_z^{(s)}  ( \widetilde \nabla U_k  \cdot \widetilde \nabla V_k ) \varphi z^{1-2s}  dz d\mu \\ &= 2\beta_s  \int_0^\infty \ns \int_M   \widetilde \nabla U_k  \cdot \widetilde \nabla V_k  (\mathcal{P}_z^{(s)} \varphi) z^{1-2s}  dz d\mu \\ &= 2\beta_s \int_{\widetilde M}    \widetilde \nabla U_k  \cdot \widetilde \nabla V_k  \Phi z^{1-2s}  dz d\mu , 
 \end{align*}
 where we have also used that $\mathcal{P}_z^{(s)}$ is self-adjoint and we have denoted by $\Phi := \mathcal{P}_z^{(s)} \varphi$ the CS extension of $\varphi$. 

 By the strong convergence of $u_k$ and $v_k$ in $H^s(M)$ we have that 
 
 \begin{align*}
    & \Lambda^{\hspace{-1pt} s/2} u_k \to \Lambda^{\hspace{-1pt} s/2} u \,\,  \mbox{ in } L^2(M) , \\  & \Lambda^{\hspace{-1pt} s/2} v_k \to \Lambda^{\hspace{-1pt} s/2} v \,\,   \mbox{ in } L^2(M) , \\ &  \widetilde\nabla U_k \to \widetilde\nabla U \,\, \mbox{ in }  L^2(\widetilde M, z^{1-2s} d\mu dz) ,  \\ &  \widetilde\nabla V_k \to \widetilde\nabla V \,\, \mbox{ in }  L^2(\widetilde M, z^{1-2s} d\mu dz) . 
 \end{align*}
 Moreover, by the maximum principle $\|\Phi\|_{L^\infty(\ov M)} \le \|\varphi\|_{L^\infty(M)}$. These conditions imply\footnote{Together with the fact that multiplication by a fixed $\varphi \in C_c^\infty(M)$ is a continuous operation on $H^s(M)$.} that each term in the previous equality passes to the limit as $k\to \infty$ to give 
  \begin{align*}
   \langle u v,  \Lambda^{\hspace{-1pt} s} \varphi \rangle_{L^2}- \langle \Lambda^{\hspace{-1pt} s/2}  (u \varphi),  \Lambda^{\hspace{-1pt} s/2} v \rangle_{L^2} -\langle \Lambda^{\hspace{-1pt} s/2} (v \varphi), \Lambda^{\hspace{-1pt} s/2} u \rangle_{L^2} =  2\beta_s \int_{\widetilde M}    \widetilde \nabla U  \cdot \widetilde \nabla V  \Phi z^{1-2s}  dz d\mu , 
 \end{align*}
that is
\begin{equation*}
    \langle \Gamma_1(u,v), \varphi \rangle_{L^2} = \int_M \left( \beta_s \int_0^\infty \mathcal{P}_z^{(s)}  ( \widetilde \nabla U \cdot \widetilde \nabla V ) z^{1-2s}  dz\right) \varphi d\mu , 
\end{equation*}
as desired. 

 \end{proof}

\subsection{Fractional Bochner's formula: proof of Theorem \ref{thm: Bochner ricci}}

\begin{lemma}\label{lem: boch extension}
Let $U\in C^\infty(\ov M )$. Then 
\[
\frac12 \mathcal{L}_{s} (|\nabla U|^2)
=
 \nabla (\mathcal{L}_{s} U) \cdot \nabla U 
+
 \big( |\nabla \widetilde \nabla U|^2
+
\Ric (\nabla  U,\nabla  U) \big) z^{1-2s}  , 
\]
where $\Ric$ is the Ricci curvature of $M$. In particular, if $\mathcal{L}_{s} U=0$ in $\ov M$, then
\[
\frac12 \mathcal{L}_{s} (|\nabla U|^2)
=
\big( |\nabla \widetilde \nabla U|^2
+
\Ric(\nabla  U,\nabla  U) \big) z^{1-2s}.
\]
\end{lemma}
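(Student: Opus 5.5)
The plan is a direct computation. Write $\mathcal{L}_s$ in coordinates adapted to the product structure of $\ov M = M\times(0,\infty)$, apply the classical Bochner formula on each horizontal slice $M\times\{z\}$, and treat the vertical terms by hand.

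Since the metric on $\ov M$ is the product metric, for any smooth $W$ on $\ov M$ we have
\begin{equation*}
\mathcal{L}_s W = z^{1-2s}\big(\Delta W + \partial_z^2 W\big) + (1-2s) z^{-2s}\partial_z W ,
\end{equation*}
where $\Delta$ acts on the $M$-variables for fixed $z$. The structural fact used throughout is that $\partial_z$ commutes with $\nabla$, $\nabla^2$ and $\Delta$ on $M$. This holds because the metric $g$ on $M$ does not depend on $z$, so in local coordinates $(x,z)$ the Christoffel symbols of the $M$-factor are $z$-independent. Functions of $z$ alone, such as $z^{1-2s}$ and $z^{-2s}$, also pass freely through $\nabla$.

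Apply this formula to $W=\frac12|\nabla U|^2$ and compute the three pieces.
\begin{itemize}
\item Horizontal part. Classical Bochner on each slice gives
\begin{equation*}
\tfrac12\Delta|\nabla U|^2 = |\nabla^2 U|^2 + \nabla\Delta U\cdot\nabla U + \Ric(\nabla U,\nabla U).
\end{equation*}
\item First vertical derivative. Since $\partial_z$ commutes with $\nabla$, $\partial_z W = \nabla\partial_z U\cdot\nabla U$.
\item Second vertical derivative. Similarly, $\partial_z^2 W = |\nabla\partial_z U|^2 + \nabla\partial_z^2 U\cdot\nabla U$.
\end{itemize}

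Multiply by the appropriate weights and collect terms. Every term in which $\nabla U$ is paired with something else combines into
\begin{equation*}
\nabla U\cdot\nabla\Big[z^{1-2s}\big(\Delta U+\partial_z^2U\big)+(1-2s)z^{-2s}\partial_z U\Big] = \nabla(\mathcal{L}_s U)\cdot\nabla U .
\end{equation*}
The remaining terms are
\begin{equation*}
z^{1-2s}\big(|\nabla^2U|^2+|\nabla\partial_zU|^2+\Ric(\nabla U,\nabla U)\big).
\end{equation*}
Finally, identify $|\nabla\ov\nabla U|^2 = |\nabla^2 U|^2 + |\nabla\partial_z U|^2$. Here $\nabla\ov\nabla U$ denotes the mixed Hessian block, with the horizontal derivative applied to the full gradient $(\nabla U,\partial_z U)$. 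The ``in particular'' statement then follows immediately by setting $\mathcal{L}_s U=0$.

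The only genuinely delicate point is the commutation of $\partial_z$ with the horizontal operators $\nabla$, $\nabla^2$ and $\Delta$. This must be justified from the product structure, i.e.\ the absence of mixed Christoffel symbols in $\ov M$. Once it is in place, everything else is bookkeeping. In particular, no boundary or integrability issues arise, since the identity is pointwise for $U\in C^\infty(\ov M)$.
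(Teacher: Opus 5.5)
Your proposal is correct and is essentially the paper's proof. Both apply the classical Bochner formula on each slice $M\times\{z\}$ and differentiate $|\nabla U|^2$ in $z$, using that for the product metric $\partial_z$ commutes with the horizontal derivatives, so that every term pairing $\nabla U$ with something else recombines into $\nabla(\mathcal{L}_s U)\cdot\nabla U$.

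The only cosmetic difference is that you expand the vertical part as $z^{1-2s}\partial_z^2+(1-2s)z^{-2s}\partial_z$, while the paper keeps it in divergence form $\partial_z(z^{1-2s}\partial_z\,\cdot\,)$. Your bookkeeping also avoids the stray factor $z^{1-2s}$ that appears in front of $\nabla U\cdot\nabla\partial_z(z^{1-2s}\partial_z U)$ in the paper's intermediate line.
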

\begin{proof}
    The proof is a straightforward application of the classical Bochner's identity to each horizontal slice $M\times \{z\}$. Indeed
    \begin{align*}
         \mathcal{L}_{s} (|\nabla U|^2) &= \widetilde \dive (z^{1-2s} \widetilde \nabla |\nabla U|^2) \\ &= z^{1-2s} \Delta|\nabla U|^2 +  \partial_z(z^{1-2s} \partial_z|\nabla U|^2) \\ &= 2 z^{1-2s} (\nabla U \cdot \nabla \Delta U +|\nabla^2 U|^2 + \Ric(\nabla U, \nabla U)) + 2 \partial_z(z^{1-2s}\nabla U \cdot \nabla \partial_z U ) \\ &= 2 z^{1-2s} \big( \nabla U \cdot \nabla \Delta U + \nabla U \cdot \nabla \partial_z(z^{1-2s}\partial_z U) +  |\nabla^2 U|^2 + |\nabla \partial_z U|^2 + \Ric(\nabla U, \nabla U)  \big) \\ &= 2  \nabla (\mathcal{L}_{s} U) \cdot \nabla U 
+
 2 |\nabla \widetilde \nabla U|^2 z^{1-2s}
+
2 \Ric(\nabla  U,\nabla  U) \big) z^{1-2s} ,  
    \end{align*}
    as desired. 
\end{proof}

 \begin{proof}[Proof of Theorem \ref{thm: Bochner ricci}] We prove \eqref{eq: eq for A} first. By Proposition \ref{prop: rep formula solutions} and Bochner's formula we get (note that this computation is well-defined in the sense of distributions by Lemma \ref{lem: decay at inf} and Lemma \ref{lem: L^1 bounds}) 
 \begin{align*}
     \frac{1}{2}\Delta \Gamma_1(u) & = \frac{\beta_s}{2} \Delta  \left(  \int_0^\infty \mathcal{P}_z^{(s)} \big( |\ov \nabla  U |^2 \big)  z^{1-2s}  dz \right) \\ &= \beta_s     \int_0^\infty \mathcal{P}_z^{(s)} \Big( \frac{1}{2}\Delta|\ov \nabla  U |^2 \Big)  z^{1-2s}  dz  \\ &= \beta_s     \int_0^\infty \mathcal{P}_z^{(s)} \Big( | \nabla^2  U |^2 + \nabla U \cdot \Delta \nabla U + \Ric(\nabla U, \nabla U) + \partial_z U \Delta \partial_z U +|\nabla \partial_z U|^2 \Big)  z^{1-2s}  dz  \\ & = \beta_s    \int_0^\infty \mathcal{P}_z^{(s)} \Big( | \ov \nabla \nabla  U |^2 + \ov \nabla U \cdot  \ov \nabla \Delta U + \Ric(\nabla U, \nabla U) \Big)  z^{1-2s}  dz \\ & = \beta_s    \int_0^\infty \mathcal{P}_z^{(s)} \Big( | \ov \nabla \nabla  U |^2  + \Ric(\nabla U, \nabla U) \Big)  z^{1-2s}  dz  + \Gamma_1(u, \Delta u ) , 
 \end{align*}
 where we have used Proposition \ref{prop: rep formula solutions} again in the last line to infer that 
 \begin{equation*}
     \Gamma_1(u, \Delta u ) = \beta_s \int_0^\infty \mathcal{P}_z^{(s)} \big( \ov \nabla U \cdot  \ov \nabla \Delta U \big) z^{1-2s} dz. 
 \end{equation*}
 Thus 
 \begin{equation*}
     \mathcal{A}(u) = \frac{1}{2}\Delta \Gamma_1(u) -  \Gamma_1(u, \Delta u ) = \beta_s    \int_0^\infty \mathcal{P}_z^{(s)} \Big( | \ov \nabla \nabla  U |^2  + \Ric(\nabla U, \nabla U) \Big)  z^{1-2s}  dz ,  
 \end{equation*}
which gives \eqref{eq: eq for A}.

 The proof of \eqref{eq: eq for B} is a second-order version of the proof of \eqref{eq: Gamma_1 rep} and follows a similar scheme. Let $P$ be the CS extension of $|\nabla u|^2$ to $\widetilde M $. We have 
     \begin{align*}
           \Lambda^{\hspace{-1pt} s} (|\nabla u|^2) & = \beta_s z^{1-2s}\partial_z P  (\cdot, 0^+)  \nonumber \\ & = \beta_s z^{1-2s}\partial_z (|\nabla U|^2)  (\cdot, 0^+)  + \beta_s z^{1-2s}\partial_z (P-|\nabla U|^2)  (\cdot, 0^+) \nonumber \\ &=  2  \nabla U \cdot (\beta_s z^{1-2s}\partial_z  \nabla U)  (\cdot, 0^+)  +\beta_s z^{1-2s} \partial_z (P - |\nabla U|^2)  (\cdot, 0^+)\nonumber \\ &= 2 \nabla u \cdot \nabla  \Lambda^{\hspace{-1pt} s}  u  + \beta_s z^{1-2s} \partial_z (P-|\nabla U|^2)  (\cdot, 0^+) ,
     \end{align*}
    that is 
    \begin{equation}\label{eq: Bu D equation}
     \mathcal{B} (u) = \frac{\beta_s}{2}  z^{1-2s} \partial_z (P - |\nabla U|^2)  (\cdot, 0^+) . 
\end{equation}

Since $\mathcal{L}_{s}U = 0 $, Lemma \ref{lem: boch extension} gives
\begin{align*}
     \frac{1}{2} \mathcal{L}_{s} (|\nabla U|^2)  = z^{1-2s} \big( |\widetilde \nabla \nabla U|^2 + \Ric(\nabla U , \nabla U) \big)  . 
\end{align*}
Hence, the difference $D:=P-|\nabla U|^2 \in \widehat{H}^1_w(\ov M)$ solves
    \begin{equation*}
    \begin{cases} 
     \mathcal{L}_{s} D = - 2 z^{1-2s} \big( |\widetilde \nabla \nabla U|^2 + \Ric(\nabla U , \nabla U) \big)    &   \mbox{in } \widetilde M ,  \\ D(\cdot,0)=0 &  \mbox{on } M  . 
    \end{cases}
\end{equation*}
Observe that the forcing term in the previous equation is in $L^1(\widetilde{M})$ thanks to \eqref{eq: L^1 bound d_z Du} and our assumptions; we can therefore apply Proposition \ref{prop: rep formula solutions} to get
 \begin{align*}
         z^{1-2s} \partial_z D (\cdot, 0^+) -  \lim_{z \uparrow \infty } z^{1-2s} \partial_z D (\cdot, z) & = 2  \int_0^\infty \ns \mathcal{P}^{(s)}_z \Big( |\widetilde \nabla \nabla U(\cdot, z)|^2 +  \Ric(\nabla U (\cdot, z) , \nabla U (\cdot, z) )  \Big) z^{1-2s}  dz,
    \end{align*}
and the latter gives (recalling \eqref{eq: Bu D equation})
\begin{equation}\label{eq: boch final proof}
     \mathcal{B} (u)-  \frac{\beta_s}{2} \lim_{z \uparrow \infty } z^{1-2s} \partial_z D (\cdot, z)  =  \beta_s \int_0^\infty \ns \mathcal{P}^{(s)}_z \Big( |\widetilde \nabla \nabla U(\cdot, z)|^2 +  \Ric(\nabla U (\cdot, z) , \nabla U (\cdot, z) )  \Big) z^{1-2s}  dz . 
\end{equation}

We are left to show that the boundary term at infinity vanishes. For $z>0$ we have 
\begin{equation*}
    z^{1-2s} \partial_z D  =  z^{1-2s} \partial_z P  - z^{1-2s} \partial_z|\nabla U|^2 = z^{1-2s} \partial_z P  - 2 z^{1-2s} \nabla U \cdot \partial_z \nabla U . 
\end{equation*}
As $z\uparrow \infty$, the first term tends to zero in $L^1(M)$ by \eqref{eq: decay d_z U} with $p=1$. The second term also tends to zero in $L^1(M)$ by Cauchy-Schwarz and \eqref{eq: decay DU}, \eqref{eq: decay d_z DU} since $ u \in H^1(M)$. 

Hence, letting $z\uparrow \infty$ in \eqref{eq: boch final proof} concludes the proof.

 \end{proof}

 We can now prove the formula for $\Gamma_2$ in Corollary \ref{thm: Gamma_2}. 

 \begin{proof}[Proof of Corollary \ref{thm: Gamma_2}] Let us compute 
 \begin{align*}
     \beta_s \int_0^\infty &  \mathcal{P}_z^{(s)} \big( \mathcal{B}(U) \big)  z^{1-2s} dz \\ & = \beta_s \int_0^\infty \mathcal{P}_z^{(s)} \Big( \frac{1}{2}  \Lambda^{\hspace{-1pt} s}|\nabla U|^2 - \nabla U \cdot \nabla \Lambda^{\hspace{-1pt} s} U \Big)  z^{1-2s} dz \\ &= \beta_s \int_0^\infty \mathcal{P}_z^{(s)} \Big( \frac{1}{2}  \Lambda^{\hspace{-1pt} s}|\ov \nabla U|^2 - \frac{1}{2}  \Lambda^{\hspace{-1pt} s}|\partial_z U|^2 - \ov \nabla U \cdot \ov \nabla \Lambda^{\hspace{-1pt} s} U + \partial_z U \partial_z \Lambda^{\hspace{-1pt} s} U \Big)  z^{1-2s} dz \\ &= \frac{1}{2} \Lambda^{\hspace{-1pt} s} \Gamma_1(u) - \Gamma_1(u, \Lambda^{\hspace{-1pt} s} u) - \beta_s  \int_0^\infty \mathcal{P}_z^{(s)} \Big( \frac{1}{2}  \Lambda^{\hspace{-1pt} s}|\partial_z U|^2 - \partial_z U  \Lambda^{\hspace{-1pt} s} \partial_z U \Big)  z^{1-2s} dz  ,
 \end{align*}
where we have used that $ \Lambda^{\hspace{-1pt} s} U $ is the extension of $\Lambda^{\hspace{-1pt} s} u$.

 Moreover, by the very definition of $\Gamma_1$, for every fixed $z>0$ there holds
 \begin{equation*}
     \frac{1}{2}  \Lambda^{\hspace{-1pt} s}|\partial_z U|^2 - \partial_z U  \Lambda^{\hspace{-1pt} s} \partial_z U = \Gamma_1( \partial_z U ) , 
 \end{equation*}
 hence
 \begin{align*}
     \beta_s \int_0^\infty   \mathcal{P}_z^{(s)} \big( \mathcal{B}(U) \big)  z^{1-2s} dz & = \frac{1}{2} \Lambda^{\hspace{-1pt} s} \Gamma_1(u) - \Gamma_1(u, \Lambda^{\hspace{-1pt} s} u) - \beta_s \int_0^\infty   \mathcal{P}_z^{(s)} \big( \Gamma_1( \partial_z U )\big)z^{1-2s} dz \\ &= \Gamma_2(u)  - \beta_s \int_0^\infty   \mathcal{P}_z^{(s)} \big( \Gamma_1( \partial_z U )\big)z^{1-2s} dz , 
 \end{align*}
 which is what we wanted.
     
 \end{proof}

\subsection{Remainder in the Córdoba-Córdoba inequality: proof of Theorem \ref{thm: C-C general phi}}

 \begin{proof}[Proof of Theorem \ref{thm: C-C general phi}] Observe that with our hypothesis (in particular $\phi(0)=0$ whenever $M$ is non-compact) we have $\phi(u) \in H^s(M)$. Denote by $U_\phi$ the CS extension of $\phi(u)$. We have 
     \begin{align*}
          \Lambda^{\hspace{-1pt} s} (\phi(u)) & =  \beta_s   z^{1-2s} \partial_z U_\phi  (\cdot, 0^+)  \\ & = \beta_s   z^{1-2s} \partial_z (\phi(U))  (\cdot, 0^+)  + \beta_s   z^{1-2s} \partial_z (U_\phi-\phi(U))(\cdot, 0^+) \\ &=   \beta_s  z^{1-2s} \phi'(U) \partial_z  U  (\cdot, 0^+)  + \beta_s   z^{1-2s} \partial_z (U_\phi-\phi(U))  (\cdot, 0^+) \\ &= \phi'(u)  \Lambda^{\hspace{-1pt} s} u  + \beta_s   z^{1-2s} \partial_z (U_\phi-\phi(U))  (\cdot, 0^+) . 
     \end{align*}

Since $\mathcal{L}_{s}U=0$, a direct computation shows 
\begin{align*}
     \mathcal{L}_{s}(\phi(U)) & = \ov \dive (z^{1-2s} \phi'(U) \ov \nabla U ) \\ & = \phi'(U) \mathcal{L}_{s}(U) +  z^{1-2s}\phi''(U)|\widetilde \nabla U|^2 = z^{1-2s}\phi''(U)|\widetilde \nabla U|^2  . 
\end{align*}
Hence, the difference $D:=U_\phi-\phi(U)  \in \widehat{H}^1_w(\ov M) \cap L^\infty(\ov M) $ solves
    \begin{equation*}
    \begin{cases} 
     \mathcal{L}_{s}D = - z^{1-2s}\phi''(U)|\widetilde \nabla U|^2    &   \mbox{in } \ov M ,   \\ D(\cdot,0)=0 &  \mbox{on } M  . 
    \end{cases}
\end{equation*}
By Proposition \ref{prop: rep formula solutions}
\begin{equation*}
         z^{1-2s} \partial_z D (\cdot, 0^+) -  \lim_{z \uparrow \infty } z^{1-2s} \partial_z D (\cdot, z) = \int_0^\infty\mathcal{P}^{(s)}_z\big( \phi''(U)|\widetilde \nabla U|^2(\cdot, z) \big) z^{1-2s}  dz ,  
    \end{equation*}
    which gives
    \begin{equation*}
     \Lambda^{\hspace{-1pt} s}(\phi(u)) - \phi'(u)  \Lambda^{\hspace{-1pt} s} u  -  \lim_{z \uparrow \infty } z^{1-2s} \partial_z D (\cdot, z) = \beta_s  \int_0^\infty \mathcal{P}^{(s)}_z\big( \phi''(U)|\widetilde \nabla U|^2(\cdot, z) \big) z^{1-2s}  dz  . 
 \end{equation*}

 Lastly, again the boundary term at infinity vanishes since by Lemma \ref{lem: decay at inf} applied to $U_\phi$ and to $U$ (together with $U \in L^\infty(\ov M)$ and $\phi'$ continuous)
\begin{equation*}
    z^{1-2s} \partial_z D = z^{1-2s} \partial_z U_\phi - z^{1-2s} \phi'(U) \partial_z U  \to 0 , \quad \mbox{in } L^1(M) \mbox{ as } z\uparrow \infty. 
\end{equation*}
This concludes the proof.
 \end{proof}

 \begin{proof}[Proof of Theorem \ref{thm: Kato}] Assume that $u\neq{\rm const.}$, otherwise the thesis is trivially true. Let $\varphi\in {\rm Lip}_c^\infty(M)$ and w.l.o.g. assume $\varphi\geq 0$: indeed one can write $\varphi=\varphi^+-\varphi^-$ and argue separately for the positive and the negative part. Let now $\ep>0$ and $\phi_\ep(t)=\sqrt{t^2+\ep^2}-\ep$ so that, as $\ep \to 0$, $  \phi_\ep' \to {\rm sgn}(\cdot)  $ pointwise everywhere and $ \phi_\ep'' \to 2 \delta_0 $ weakly as Radon measures. Applying Theorem \ref{thm: C-C general phi} to $\phi_\ep(t)$ and integrating against $\varphi$ we get
 \begin{equation}\label{eq: Kato eq before limit}
      \int_M\Big(\Lambda^{\hspace{-1pt} s}\phi_\ep(u) - \phi_\ep^\prime(u) \Lambda^{\hspace{-1pt} s} u\Big)\varphi \, d\mu = \beta_s \int_M\int_0^\infty \varphi \mathcal{P}^{(s)}_z \left(  \phi^{\prime\prime}_\ep(U) | \ov \nabla U(\cdot,z)|^2 \right) z^{1-2s} dz d\mu .
 \end{equation}
We first compute the limit of the r.h.s. of this equation, which is the nontrivial part.  

Let $\Phi:= \mathcal{P}^{(s)}_z \varphi $ be the CS extension of $\varphi$. Exploiting that $\mathcal{P}^{(s)}_z$ is self-adjoint together with Coarea formula, we get
 \begin{align*} \beta_s \int_M \int_0^\infty  \varphi\mathcal{P}^{(s)}_z \left( \phi^{\prime\prime}_\ep(U) | \ov \nabla U(\cdot,z)|^2 \right)  z^{1-2s} dz d\mu   &=
     \beta_s   \int_0^\infty \ns \int_M    \phi^{\prime\prime}_\ep(U)  | \ov \nabla U|^2 \Phi z^{1-2s}  d\mu dz \\ & = \beta_s \int_{\R} \phi^{\prime\prime}_\ep(t) \left( \int_{\{U=t \}}  |\ov \nabla U| \Phi z^{1-2s}d\mathcal{H}^{n} \right) dt  \\ &= \int_{\R} \phi^{\prime\prime}_\ep(t) F_\varphi(t) \, dt , 
 \end{align*}
 where we defined 
 \begin{equation*}
     F_\varphi(t) :=  \beta_s \int_{\{U=t \}}  |\ov \nabla U| \Phi z^{1-2s}d\mathcal{H}^{n}(x,z)\quad{\rm for}\;{\rm a.e.}\;t\in\R.
 \end{equation*}
From now on, with a little bit of abuse of notation, we shall denote by $t\mapsto F_\varphi(t)$ the everywhere defined representative of the $L^1(\R)$ function $F_\varphi$. Observe that such a representative is well-defined for every $t\in \R$, since the integrand is nonnegative and crucially, since the operator $\mathcal{L}_s$ is locally uniformly elliptic in $\ov M$ and $u, U \not\equiv {\rm const}$, by \cite[Theorem 1.7 and Theorem 1.10]{HardtSimon} the level set $\{U=t\}$ is locally in $\ov M$ the (at most) countable union of $C^1$ submanifolds with finite $\mathcal{H}^n$-measure, outside a singular set with Hausdorff dimension $\le n-1$. In particular $\{U=t\}$ is $\sigma$-finite w.r.t. $\mathcal{H}^n$ in $\ov M$. 

First we claim that, for every $t\in \R$, $F_\varphi(t)$ can be written as 
\begin{align}
    F_\varphi(t)  & = - \beta_s \int_{\{U>t\}}  \ov \nabla U \cdot \ov \nabla \Phi z^{1-2s} d\mu dz - \int_{\{ u>t\}} (\Lambda^{\hspace{-1pt} s} u ) \varphi \, d\mu - \int_{\{u=t\} \cap \{ \Lambda^s u > 0 \}} (\Lambda^{\hspace{-1pt} s} u ) \varphi \, d\mu \label{eq: f(t) as sum2} \\ &=:  H(t)+G(t)+R(t) .  \nonumber 
\end{align}
To show this, we first localize with cutoffs very similarly to the proof of Proposition \ref{prop: rep formula solutions}. Fix $p \in M$ and let $\eta_k \in C_c^\infty(M)$ be a standard cutoff with $\eta_k =1$ in $B_k(p)$ and $\eta_k =0$ in $ M \setminus B_{2k}(p)$. Let also $0<\ep < R$ and $I_{\ep,R}:= (\ep,R)$.

Indeed, on the one hand since $\mathcal{L}_s U = 0 $ in $\ov M$ we have 
\begin{align*}
   \int_{\{U>t\} \cap I_{\ep,R} } \ns \widetilde \dive  (\ov \nabla U \Phi \eta_k & z^{1-2s})  \, d\mu dz   \\ & =  \int_{\{U>t\} \cap I_{\ep,R} } \ns \ov \nabla U \cdot \ov \nabla (\Phi \eta_k) z^{1-2s} d\mu dz \\ &=  \int_{\{U>t\}  \cap I_{\ep,R} } \ns \ov \nabla U \cdot \ov \nabla \Phi \eta_k z^{1-2s} d\mu dz + \int_{\{U>t\}  \cap I_{\ep,R} } \ns \nabla U \cdot \nabla \eta_k  \Phi  z^{1-2s} d\mu dz . 
\end{align*}
On the other hand, by the divergence theorem
\begin{align*}
       \int_{\{U>t\} \cap  I_{\ep,R}} \ns  \widetilde \dive(\ov \nabla U & \Phi \eta_k z^{1-2s}) d\mu dz  \\ & =  \int_{ \partial (\{U>t\} \cap  I_{\ep,R} )} \nu \cdot  \ov \nabla U \Phi \eta_k z^{1-2s} d\mathcal{H}^n \\ & = -  \int_{\{U=t\} \cap  I_{\ep,R} } \frac{\ov \nabla U}{|\ov \nabla U|} \cdot \ov \nabla U  \Phi \eta_k z^{1-2s} d\mathcal{H}^n -  \int_{\{U(\cdot, \ep)>t\}} \ep^{1-2s} \partial_z U \Phi (\cdot, \ep)\eta_k \, d\mu \\ & \hspace{12pt} +  \int_{\{U(\cdot, R)>t\}} R^{1-2s} \partial_z U \Phi (\cdot, R) \eta_k \, d\mu \\ & = - \int_{\{U=t\} \cap  I_{\ep,R} } |\ov \nabla U|  \Phi \eta_k z^{1-2s} d\mathcal{H}^n -  \int_{\{U(\cdot, \ep)>t\}} \ep^{1-2s} \partial_z U \Phi(\cdot, \ep) \eta_k \, d\mu \\ & \hspace{12pt} +  \int_{\{U(\cdot, R)>t\}} R^{1-2s} \partial_z U \Phi(\cdot, R)\eta_k \, d\mu .
\end{align*}
Letting $k\to \infty$ and using that
\begin{align*}
   & \int_{\{U>t\}  \cap I_{\ep,R} } \ns \ov \nabla U \cdot \ov \nabla \Phi \eta_k z^{1-2s} d\mu dz  \to \int_{\{U>t\}  \cap I_{\ep,R} } \ns \ov \nabla U \cdot \ov \nabla \Phi z^{1-2s} d\mu dz  \quad \mbox{since } U,\Phi \in \widehat{H}^1_w(\ov M) , \\ &  \int_{\{U>t\}  \cap I_{\ep,R} } \ns \nabla U \cdot \nabla \eta_k  \Phi  z^{1-2s} d\mu dz  \to 0 \quad \mbox{since } \nabla U \Phi z^{1-2s} \in L^1(M\times I_{\ep,R}) \mbox{ and } \|\nabla \eta_k\|_{L^\infty} \to 0, \\  & \int_{\{U=t\} \cap  I_{\ep,R} } |\ov \nabla U|  \Phi \eta_k z^{1-2s} d\mathcal{H}^n  \to \int_{\{U=t\} \cap  I_{\ep,R} } |\ov \nabla U|  \Phi z^{1-2s} d\mathcal{H}^n \quad \mbox{by monotone convergence}, \\ & \int_{\{U(\cdot, z)>t\}} z^{1-2s} \partial_z U \Phi \eta_k \, d\mu  \to \int_{\{U(\cdot, z)>t\}} z^{1-2s} \partial_z U \Phi \, d\mu \quad \mbox{by } \eqref{eq: decay d_z U} \mbox{ and dominated convergence},
\end{align*}
we get
\begin{align*}
    \int_{\{U>t\}  \cap I_{\ep,R} } \ns \ov \nabla U \cdot \ov \nabla \Phi z^{1-2s} d\mu dz & = - \int_{\{U=t\} \cap  I_{\ep,R} } |\ov \nabla U|  \Phi z^{1-2s} d\mathcal{H}^n  -  \int_{\{U(\cdot, \ep)>t\}} \ep^{1-2s} \partial_z U \Phi(\cdot, \ep)  \, d\mu \\ & \hspace{12pt} +  \int_{\{U(\cdot, R)>t\}} R^{1-2s} \partial_z U \Phi(\cdot, R) \, d\mu .
\end{align*}
Now we send $R\uparrow \infty$. For the boundary term at $z=R$, using that $\|\Phi(\cdot, R)\|_{L^\infty(M)} \le \|\varphi\|_{L^\infty(M)}$ and \eqref{eq: decay d_z U}, we get  
\begin{equation*}
    \left| \int_{\{U(\cdot, R)>t\}} R^{1-2s} \partial_z U \Phi (\cdot,R) \, d\mu \right| \le  \int_{M} R^{1-2s} |\partial_z U (\cdot,R)| |\Phi (\cdot,R)| \, d\mu  \to 0 . 
\end{equation*}
The first term on the r.h.s. converges as well as $R\uparrow\infty$ thanks to the monotone convergence theorem, and the l.h.s. also converges since $U,\Phi \in \widehat{H}^1_w(\ov M)$. Hence, letting $R \uparrow \infty$ first and then $\ep \downarrow 0$ we are left with 
\begin{equation*}
    \beta_s \int_{\{U>t\}} \ov \nabla U \cdot \ov \nabla \Phi z^{1-2s} d\mu dz = - \beta_s \int_{\{U=t\}} |\ov \nabla U|  \Phi z^{1-2s} d\mathcal{H}^n - \lim_{\ep \downarrow 0 } \beta_s \int_{\{U(\cdot, \ep)>t\}} \ep^{1-2s} \partial_z U \Phi (\cdot, \ep) \, d\mu . 
\end{equation*}
The delicate part is to compute the limit as $\ep\downarrow 0 $ on the r.h.s.  

First, observe that since $\beta_s z^{1-2s}\partial_z U (\cdot, z) \to \Lambda^{\hspace{-1pt} s} u $ in $L^2(M)$ (see the few lines above \eqref{eq: conv Stinga} in the proof of Proposition \ref{prop: rep formula solutions}) and  $\Phi(\cdot,z) \to \varphi$ in $L^2(M)$, points in $\{\Lambda^{\hspace{-1pt} s} u = 0 \}$ give no contribution to the limit, that is 
\begin{equation*}
    \lim_{z \downarrow 0 }  \int_{M} (z^{1-2s} \partial_z U) \mathbbm{1}_{\{U(\cdot, z)>t\}}\Phi \, d\mu = \lim_{z \downarrow 0 } \int_{M} (z^{1-2s} \partial_z U) \mathbbm{1}_{\{U(\cdot, z)>t\} \cap \{\Lambda^{\hspace{-1pt} s} u \neq 0 \} }\Phi \, d\mu .
\end{equation*}
Then, as $z\downarrow 0$, we claim that  
\begin{equation*}
     \mathbbm{1}_{ \{U(\cdot, z)>t\} \cap \{\Lambda^{\hspace{-1pt} s} u \neq 0 \} } \to  \mathbbm{1}_{\{u>t\}} + \mathbbm{1}_{\{u=t\} \cap \{ \Lambda^{\hspace{-1pt} s} u > 0 \}} =  \mathbbm{1}_{\{u>t\} \cup (\{u=t\} \cap \{ \Lambda^{\hspace{-1pt} s} u > 0\})}  \qquad \mbox{in } L^1_{\rm loc}(M) . 
\end{equation*}
It clearly suffices to show almost everywhere convergence in a compact set $K\subseteq M$. We know that 
\begin{equation}\label{eq: expansion U}
    U(x,z) = u(x)+ \frac{z^{2s}}{2s\beta_s} \Lambda^{\hspace{-1pt} s} u (x) + o(z^{2s}) , 
\end{equation}
uniformly for $x\in K$. The fact that the remainder in \eqref{eq: expansion U} is uniform in compact sets is an easy consequence of the semigroup formula \eqref{eq: U formula with heat} and dominated convergence. Now for every $x \in \{U(\cdot, z)>t\} \cap \{\Lambda^{\hspace{-1pt} s} u \neq 0 \}$ we distinguish two cases. 

If $x \in \{ u \neq t\}$ then the expansion \eqref{eq: expansion U} gives 
\begin{equation*}
    {\rm sgn} (U(x,z) - t ) = {\rm sgn} ( u(x) - t  ) ,  
\end{equation*}
for $z$ small uniformly in $x\in K$. Hence, in this case
\begin{equation*}
    \mathbbm{1}_{\{U(\cdot, z)>t\}}(x) = \mathbbm{1}_{\{u>t\}}(x) , \quad \mbox{for } z \mbox{ small.}
\end{equation*}

If $x \in \{ u = t\}$ then \eqref{eq: expansion U} gives 
\begin{equation*}
    {\rm sgn} (U(x,z) - t ) = {\rm sgn} (\Lambda^{\hspace{-1pt} s} u (x)  ) ,  
\end{equation*}
for $z$ small uniformly for $x\in K$. Hence, in this other case 
\begin{equation*}
    \mathbbm{1}_{\{U(\cdot, z)>t\}}(x) = \mathbbm{1}_{\{u=t\} \cap \{ \Lambda^{\hspace{-1pt} s} u > 0 \}}(x) , \quad \mbox{for } z \mbox{ small.}
\end{equation*}
Since exactly one of these two cases holds for every $x\in K$, we have shown the desired convergence of characteristic functions. 

Thus 
\begin{align*}
   \lim_{z \downarrow 0 }   \beta_s \int_{M} (z^{1-2s} \partial_z U) \mathbbm{1}_{\{U(\cdot, z)>t\} \cap \{\Lambda^{\hspace{-1pt} s} u \neq 0 \} }\Phi \, d\mu  = \int_{\{u>t\}} (\Lambda^{\hspace{-1pt} s} u)\varphi \, d\mu  +  \int_{\{u=t\}\cap \{\Lambda^{\hspace{-1pt} s} u > 0 \}} (\Lambda^{\hspace{-1pt} s} u)\varphi \, d\mu . 
\end{align*}
Rearranging the terms, we get \eqref{eq: f(t) as sum2}. Now we conclude the proof using \eqref{eq: f(t) as sum2}.  

A simple application of the Coarea formula gives that $t\mapsto H(t)$ is continuous over all $\R$, hence 
\begin{equation*}
     \lim_{\ep\to 0^+}\int_{\R}\phi_\ep^{\prime\prime}(t)H(t)\,dt = 2H(0) = - \beta_s \int_{\{U>0\}}  \ov \nabla U \cdot \ov \nabla \Phi z^{1-2s} d\mu dz . 
\end{equation*}

Concerning $G$, exchanging the order of integration gives
\begin{align*}
    \int_{\R}\phi_{\ep}^{\prime\prime}(t)G(t) \, dt &= -\int_M(\Lambda^{\hspace{-1pt} s} u)(x)\varphi(x)\left( \int_{-\infty}^{u(x)}\phi_\ep^{\prime\prime}(t)\,dt \right) d\mu(x) \\
    &=-\int_M(\Lambda^{\hspace{-1pt} s} u)(x)\varphi(x)\Big(\phi_\ep^\prime(u(x))+1\Big) d\mu(x) \\ 
    &=-\int_{\{u\neq 0\}}(\Lambda^{\hspace{-1pt} s} u)(x)\varphi(x)\Big(\phi_\ep^\prime(u(x))+1\Big)d\mu(x)-\int_{\{u=0\}}(\Lambda^{\hspace{-1pt} s} u)(x)\varphi(x) \, d\mu(x),
\end{align*}
where we also used $\phi_\ep^\prime(0)=0$. 
Now observe that $\phi_\ep^\prime(u(x))$ converges pointwise to  ${\rm sgn}(u)$ on the set $\{u\neq 0\}$, so that taking the limit yields
\begin{equation*}
 \lim_{\ep\to 0}\int_{\R}\phi_{\ep}^{\prime\prime}(t)G(t)dt=-2\int_{\{u>0\}}(\Lambda^{\hspace{-1pt} s} u) \varphi \, d\mu-\int_{\{u=0\}}(\Lambda^{\hspace{-1pt} s} u) \varphi \, d\mu.
\end{equation*}

Lastly, by Sard's theorem $\mathcal{H}^n(\{u=t\}) = 0 $ for a.e. $t\in \R$, thus $R(t)=0$ for a.e. $t\in \R$ and it does not contribute to the integration against $\phi_\ep''$.

All in all, for the r.h.s. of \eqref{eq: Kato eq before limit}, using \eqref{eq: f(t) as sum2} at $t=0$ we get 
\begin{align*}
    \lim_{\ep\to 0^+}\int_{\R}\phi_\ep^{\prime\prime}(t)F_\varphi(t)\,dt&= 2H(0)  -2\int_{\{u>0\}}(\Lambda^{\hspace{-1pt} s} u)\varphi \, d\mu -\int_{\{u=0\}}(\Lambda^{\hspace{-1pt} s} u)\varphi \, d\mu \\ & = -2\beta_s\int_{\{U>0\}} \ov \nabla U \cdot \ov \nabla \Phi z^{1-2s} d\mu dz-2\int_{\{u>0\}}(\Lambda^{\hspace{-1pt} s} u) \varphi \, d\mu -\int_{\{u=0\}}(\Lambda^{\hspace{-1pt} s} u) \varphi \, d\mu \\
    &=2F_\varphi(0) + 2 \int_{\{u=0\} \cap \{\Lambda ^s u > 0\}} (\Lambda^{\hspace{-1pt} s} u) \varphi \, d\mu  - \int_{\{u=0\}}(\Lambda^{\hspace{-1pt} s} u) \varphi \, d\mu  \\
    &=2F_\varphi(0) +  \int_{\{u=0\} \cap \{\Lambda ^s u > 0\}} (\Lambda^{\hspace{-1pt} s} u) \varphi \, d\mu  - \int_{\{u=0\} \cap \{\Lambda ^s u \le 0\}}(\Lambda^{\hspace{-1pt} s} u) \varphi \, d\mu \\ & = 2F_\varphi(0) +  \int_{\{u=0\}} |\Lambda^{\hspace{-1pt} s} u| \varphi \, d\mu , 
\end{align*}
as desired.

Lastly, the l.h.s. of \eqref{eq: Kato eq before limit} clearly tends to the desired limit by self adjointness of $\Lambda^{\hspace{-1pt} s}$ and the fact that $\phi_\ep(u) \to |u|$ in $L^1(M)$ for the first term, while for the second one we just apply dominated convergence theorem.
\end{proof}

 \begin{proof}[Proof of Proposition \ref{prop:SVI}] Integrating the pointwise identity \eqref{eq: pointwise SV} over $M$ and using that $\int_{M} \Lambda^s(|u|^q) \, d\mu = 0 $ gives
     \begin{align*}
         \int_{M} (|u|^{q-2}u) (-\Delta)^s u \,d\mu & = \frac{4(q-1)}{q^2} \beta_s
\int_M \int_0^\infty
\mathcal{P}_z^{(s)} \left(|\widetilde\nabla(|U|^{q/2})|^2(\cdot,z)\right) z^{1-2s}\,dz d\mu \\ &= \frac{4(q-1)}{q^2} \beta_s
 \int_0^\infty \ns \int_M|\widetilde\nabla(|U|^{q/2})|^2 z^{1-2s}\, d\mu dz  , 
     \end{align*}
     where we have also used
     \begin{equation*}
         \int_M \mathcal{P}_z^{(s)}(x,y) \, d\mu(y) = 1 \qquad \forall \, x\in M,\;z\in(0,\infty).
     \end{equation*}
     Since the $H^s$-seminorm of $|u|^{q/2}$ is the infimum of the ${\widehat H}^1_w (\widetilde M)$ seminorm over all the extension with trace $|u|^{q/2}$, and since $|U|^{q/2}$ is one such extension, we have 
\begin{equation*}
   \beta_s \int_0^\infty \ns \int_M|\widetilde\nabla(|U|^{q/2})|^2 z^{1-2s}\, d\mu dz \ge \int_M |(-\Delta)^{s/2}(|u|^{q/2})|^2 \, d\mu ,
\end{equation*}
which gives what we wanted. 
 \end{proof}

\appendix
 \section{}\makeatletter\def\@currentlabel{Appendix}\makeatother

 \begin{lemma}\label{lem: conv V}
Let $s \in (0,1)$ and let $V \in \widehat{H}^1_w(\widetilde M)$ be smooth such that \(V(\cdot,0)=0\) on $M$. Then  
\[
 V(\cdot,z) \to 0 \, \mbox{ strongly in } L^2(M), \mbox{ as } z \downarrow 0 . 
\]
\end{lemma}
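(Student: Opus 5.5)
The idea is to use the fundamental theorem of calculus in the vertical variable together with a weighted Cauchy--Schwarz inequality. This is the Hardy-type trace estimate behind the fact that traces of $\widehat H^1_w$ functions are well defined.

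Since $V$ is smooth up to $z=0$ with $V(\cdot,0)=0$, for every $x\in M$ and $z>0$ we write
\begin{equation*}
V(x,z)=\int_0^z \partial_z V(x,\tau)\,d\tau .
\end{equation*}
Multiplying and dividing by $\tau^{\frac{1-2s}{2}}$ and applying Cauchy--Schwarz in $\tau$ gives
\begin{equation*}
|V(x,z)|^2 \le \Big(\int_0^z \tau^{2s-1}\,d\tau\Big)\Big(\int_0^z |\partial_z V(x,\tau)|^2 \tau^{1-2s}\,d\tau\Big) = \frac{z^{2s}}{2s}\int_0^z |\partial_z V(x,\tau)|^2 \tau^{1-2s}\,d\tau .
\end{equation*}
The integral $\int_0^z\tau^{2s-1}\,d\tau$ is finite precisely because $s>0$.

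Integrating over $M$ and using Tonelli, we obtain
\begin{equation*}
\|V(\cdot,z)\|_{L^2(M)}^2 \le \frac{z^{2s}}{2s}\int_{M\times(0,z)} |\widetilde\nabla V|^2 \tau^{1-2s}\,d\mu\,d\tau \le \frac{z^{2s}}{2s}\,[V]^2_{\widehat H^1_w(\widetilde M)} .
\end{equation*}
This tends to $0$ as $z\downarrow 0$. In fact the integral factor itself also tends to zero by dominated convergence, but the explicit factor $z^{2s}$ already suffices.

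The bound holds for every $z>0$, not only for small $z$. It gives $\|V(\cdot,R)\|_{L^2(M)}\le C_s R^{s}[V]_{\widehat H^1_w}$, which is exactly the growth estimate invoked for the term at $z=R$ in the proof of Proposition \ref{prop: rep formula solutions}. The same computation on $M\times(\ep,R)$ also yields $V\in L^2(M\times(\ep,R))$ with norm controlled by $C_{\ep,R}[V]_{\widehat H^1_w}$, which is what the cutoff argument there uses. I will therefore record the inequality for all $z$, so it can be quoted directly.

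The only delicate point is the boundary behaviour. We need the identity $V(x,z)=\int_0^z\partial_zV$ to hold pointwise, which requires $V$ to be continuous up to $z=0$ with zero trace. This is covered by the smoothness hypothesis. If only a trace in a weaker sense were available, one would instead integrate from $\delta>0$ and let $\delta\downarrow0$, using the same estimate to see that $V(\cdot,\delta)$ is Cauchy in $L^2$. No other obstacle is expected.
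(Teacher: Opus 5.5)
Your proof is correct and is essentially identical to the paper's. Both write $V(x,z)=\int_0^z\partial_\tau V(x,\tau)\,d\tau$, apply Cauchy--Schwarz with the weights $\tau^{2s-1}$ and $\tau^{1-2s}$, and integrate over $M$ to get $\|V(\cdot,z)\|_{L^2(M)}^2\le \frac{z^{2s}}{2s}[V]^2_{\widehat H^1_w(\widetilde M)}$. Your remark that this bound holds for every $z>0$ is exactly how the paper reuses ``the proof of'' this lemma for the $z=R$ and cutoff terms in Proposition \ref{prop: rep formula solutions}.
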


\begin{proof}
Since \(V(\cdot,0)=0\) and \(V\) is smooth up to \(z=0\) we can write  $V(x,z)=\int_0^z \partial_t V(x,t)\,dt$. By Cauchy-Schwarz 
\[
|V(x,z)|^2
\le
\left(\int_0^z t^{2s-1}\,dt\right)
\left(\int_0^z |\partial_tV(x,t)|^2 t^{1-2s} dt\right)
=
\frac{z^{2s}}{2s}\int_0^z |\partial_tV(x,t)|^2 t^{1-2s} dt , 
\]
and integrating over \(M\) gives 
\[
\|V(\cdot,z)\|_{L^2(M)}^2
\le
\frac{z^{2s}}{2s}
\int_M\int_0^z |\partial_tV(x,t)|^2 t^{1-2s} dt d\mu(x) \le \frac{z^{2s}}{2s} \int_{\ov M} |\ov \nabla V|^2 t^{1-2s} d\mu(x)dt  \to 0 ,
\]
as $z \downarrow 0 $. 
\end{proof}

\begin{lemma}\label{lem: decay at inf} 
    Let $s\in (0,1)$, $p\in[1,\infty]$, $u \in C_c^\infty(M)$ and $U$ be its CS extension. Then 
    \begin{align}
        z^{1-2s} \|\partial_z U (\cdot,z) \|_{L^p(M)} & \leq C_s  z^{-2s} \|u\|_{L^p(M)}  , \label{eq: decay d_z U} 
    \end{align}
    and 
    \begin{align}
        z^{1-2s} \|\partial_z \nabla U (\cdot,z) \|_{L^2(M)} & \leq C_s  z^{-2s} \|\nabla  u\|_{L^2(M)}  \label{eq: decay d_z DU}  \\  \| \nabla U (\cdot,z) \|_{L^2(M)} & \leq C_s   \|\nabla u \|_{L^2(M)}  \label{eq: decay DU}
    \end{align}
\end{lemma}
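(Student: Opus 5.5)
We will prove all three estimates from the subordination formula for the Poisson kernel in \eqref{eq: frac Poisson def}. Substituting $t = z^2/(4r)$ gives
\[
U(\cdot,z)=\frac{1}{\Gamma(s)}\int_0^\infty e^{-r}\, r^{s-1}\, e^{\frac{z^2}{4r}\Delta}u \, dr .
\]
Differentiating this in $z$ is awkward, so we instead differentiate \eqref{eq: frac Poisson def} directly in $z$ before the change of variables. This gives $\partial_z\mathcal{P}^{(s)}_z(x,y)$ as an integral of $H_M(x,y,t)$ against the kernel
\[
k_z(t)=\frac{1}{4^s\Gamma(s)}\,\partial_z\Big(z^{2s}e^{-z^2/4t}\Big)t^{-1-s}
=\frac{1}{4^s\Gamma(s)}\Big(\frac{2s}{z}-\frac{z}{2t}\Big)z^{2s}e^{-z^2/4t}\,t^{-1-s}.
\]

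For \eqref{eq: decay d_z U}, we use that the heat semigroup $e^{t\Delta}$ is a contraction on every $L^p(M)$, $p\in[1,\infty]$. This holds on any complete manifold because the heat kernel is positive and sub-Markovian, with $\int_M H_M\,d\mu\le 1$; stochastic completeness is not needed for contractivity. Minkowski's integral inequality then gives
\[
\|\partial_z U(\cdot,z)\|_{L^p}\le \Big(\int_0^\infty |k_z(t)|\,dt\Big)\|u\|_{L^p}.
\]
The substitution $t=z^2 \tau$ shows that
\[
\int_0^\infty |k_z(t)|\,dt=\frac{1}{z}\int_0^\infty \frac{1}{4^s\Gamma(s)}\big|2s-\tfrac{1}{2\tau}\big|e^{-1/4\tau}\tau^{-1-s}\,d\tau=: \frac{C_s}{z}.
\]
The $\tau$-integral is finite: the exponential kills the singularity at $\tau=0$, and $\tau^{-1-s}$ is integrable at infinity. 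Multiplying by $z^{1-2s}$ gives exactly $C_s z^{-2s}\|u\|_{L^p}$.

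For \eqref{eq: decay DU} and \eqref{eq: decay d_z DU}, the idea is the same, with one extra input: commuting $\nabla$ with the semigroup in $L^2$. Rather than a pointwise commutation, which would require Ricci bounds, we use the spectral identity $\|\nabla w\|_{L^2}^2=\langle -\Delta w,w\rangle = \|(-\Delta)^{1/2}w\|_{L^2}^2$, valid for $w$ in the form domain. Then $\|\nabla U(\cdot,z)\|_{L^2}=\|(-\Delta)^{1/2}\mathcal{P}_z^{(s)}u\|_{L^2}$. Since $\mathcal{P}^{(s)}_z=g_z(-\Delta)$ is a bounded Borel function of $-\Delta$ with $|g_z|\le1$, it commutes with $(-\Delta)^{1/2}$, which yields $\|\nabla U(\cdot,z)\|_{L^2}\le\|\nabla u\|_{L^2}$. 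For the derivative estimate, $\partial_z\mathcal{P}^{(s)}_z=g_z'(-\Delta)$, where
\[
g_z'(\lambda)=\int_0^\infty k_z(t)e^{-t\lambda}\,dt
\quad\Longrightarrow\quad
\sup_\lambda|g_z'(\lambda)|\le C_s/z
\]
by the previous computation. This operator also commutes with $(-\Delta)^{1/2}$, so $\|\nabla\partial_z U(\cdot,z)\|_{L^2}\le (C_s/z)\|\nabla u\|_{L^2}$, and multiplying by $z^{1-2s}$ gives \eqref{eq: decay d_z DU}.

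The main obstacle is justifying differentiation under the integral sign and the functional-calculus manipulations. Since $u\in C_c^\infty(M)$, we have $u\in \mathrm{Dom}(-\Delta)\cap L^p$, and $|k_z|$ is integrable locally uniformly in $z>0$, so dominated convergence handles the interchange. One further point needs care: for $p\neq 2$, Minkowski's inequality requires the heat kernel representation to be the $L^p$ semigroup. On a complete manifold the minimal heat kernel generates consistent semigroups on all $L^p$, so this is standard, and we would cite \cite{GrygBook} for it.
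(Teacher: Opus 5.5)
Your proof is correct and follows essentially the paper's route: differentiate the subordination formula in $z$, apply Minkowski's inequality together with the $L^p$-contractivity of the heat semigroup, and rescale $t=z^2\tau$ to extract the factor $z^{-1}$. For the gradient bounds you apply the spectral calculus directly to $g_z(-\Delta)$ and $\partial_z g_z(-\Delta)$, which are bounded by $1$ and $C_s/z$. The paper instead combines Minkowski with $\|\nabla P_t u\|_{L^2}\le\|\nabla u\|_{L^2}$, but this is the same mechanism: $\|\nabla w\|_{L^2}=\|(-\Delta)^{1/2}w\|_{L^2}$, and bounded functions of $-\Delta$ commute with $(-\Delta)^{1/2}$.
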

\begin{proof} 
The proof of all these estimates is a direct application of the explicit formula (see \cite[Theorem 1.1]{StingaTorrea})
     \begin{equation}\label{eq: U formula with heat}
         U(x,z) = \frac{z^{2s}}{4^s \Gamma(s)} \int_0^\infty (P_t u)(x) e^{-\frac{z^2}{4t}} \frac{dt}{t^{1+s}} , 
     \end{equation}
     where $P_t$ is the heat semigroup. Indeed, for \eqref{eq: decay d_z U}, by Minkowski's integral inequality
     \begin{align*}
         z^{1-2s} \|\partial_z U(\cdot,z)\|_{L^p(M)}  & \le  \int_{0}^\infty  \| P_t u \|_{L^p(M)} \left|2s- \frac{z^2}{2t}\right| e^{-\frac{z^2}{4t}} \frac{dt}{t^{1+s}}  \\ & \le \|  u \|_{L^p(M)} \int_{0}^\infty  \left|2s- \frac{z^2}{2t}\right| e^{-\frac{z^2}{4t}} \frac{dt}{t^{1+s}} \\ & =  \|u\|_{L^p(M)} \cdot 4^s z^{-2s} \int_0^\infty |2s-2r| e^{-r}r^{s-1} dr \\ & = C_s z^{-2s} \|u\|_{L^p(M)}  , 
     \end{align*}
     where we have used the $L^p$-contractivity of the heat semigroup and made a change of variables.

     The proofs of \eqref{eq: decay d_z DU} and \eqref{eq: decay DU} follow an identical scheme using that $\|\nabla P_t u \|_{L^2(M)} \le \|\nabla u\|_{L^2(M)}$ whenever $u\in H^1(M) = {\rm Dom}((-\Delta)^{1/2})$. 
\end{proof}

\begin{lemma}\label{lem: L^1 bounds}Let $s\in (0,1)$, $u\in C_c^\infty(M)$ and $U$ be its CS extension. Then 
\begin{equation}\label{eq: L^1 bound d_z Du}
    z^{1-2s}  |\nabla \partial_z U|^2  \in L^1(\ov M) . 
\end{equation}
Moreover, if in addition $z^{1-2s}\big(|\nabla^2 U|^2+\Ric(\nabla U, \nabla U) \big) \in L^1(\ov M)$ then 
\begin{equation}\label{eq: L^1 bound Delta|DU|^2}
    z^{1-2s} \Delta |\ov \nabla U|^2 \in L^1(\ov M) .  
\end{equation}
\end{lemma}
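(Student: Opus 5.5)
For \eqref{eq: L^1 bound d_z Du} I will use the heat-semigroup formula \eqref{eq: U formula with heat} together with spectral calculus. Differentiating in $z$ gives
\[
z^{1-2s}\partial_z U(\cdot,z)=\frac{1}{4^s\Gamma(s)}\int_0^\infty (P_t u)\Big(2s-\frac{z^2}{2t}\Big)e^{-\frac{z^2}{4t}}\frac{dt}{t^{1+s}}
=: \mathcal{Q}_z u,
\]
and $\nabla$ commutes with the $t$-integral. In spectral form, $U(\cdot,z)=\psi(z\sqrt{\lambda})u$ with $\psi(r)=\frac{2^{1-s}}{\Gamma(s)}r^sK_s(r)$. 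Hence
\[
\int_M|\nabla\partial_zU|^2\,d\mu=\int_{\sigma(-\Delta)}\lambda\cdot \lambda\,\psi'(z\sqrt\lambda)^2\,d\|E_\lambda u\|^2 .
\]
Integrating against $z^{1-2s}dz$ and substituting $r=z\sqrt\lambda$ gives
\[
\int_0^\infty\!\!\int_M z^{1-2s}|\nabla\partial_zU|^2\,d\mu\,dz=c_s\int\lambda^{1+s}\,d\|E_\lambda u\|^2,\qquad c_s=\int_0^\infty r^{1-2s}\psi'(r)^2\,dr .
\]
The constant $c_s$ is finite: near $0$ we have $\psi'(r)\sim r^{2s-1}$, so the integrand is $\sim r^{2s-1}$, which is integrable; at infinity $\psi'$ decays exponentially. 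Since $u\in C_c^\infty(M)\subset \mathrm{Dom}(-\Delta)$, the integral $\int\lambda^{1+s}\,d\|E_\lambda u\|^2\le\|u\|_{H^2}^2<\infty$. This gives \eqref{eq: L^1 bound d_z Du}. The same computation with $\psi$ in place of $\psi'$ and the weight $\lambda^2$ shows $z^{1-2s}|\nabla\partial_z\partial_z U|$-type terms are finite. More importantly, it gives $z^{1-2s}|\ov\nabla U||\ov\nabla\Delta U|\in L^1$ by Cauchy--Schwarz, since $\Delta U$ is the extension of $\Delta u\in H^s$ and both $U$ and $\Delta U$ have finite weighted energy.

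For \eqref{eq: L^1 bound Delta|DU|^2} I will expand using the slicewise Bochner formula, as in the proof of Theorem \ref{thm: Bochner ricci}:
\[
\tfrac12\Delta|\ov\nabla U|^2=|\nabla^2U|^2+\Ric(\nabla U,\nabla U)+|\nabla\partial_zU|^2+\ov\nabla U\cdot\ov\nabla\Delta U .
\]
Each term, multiplied by $z^{1-2s}$, is in $L^1(\ov M)$:
\begin{itemize}
\item the first two together by hypothesis;
\item the third by \eqref{eq: L^1 bound d_z Du};
\item the fourth by Cauchy--Schwarz and the finiteness of the weighted energies of $U$ and $\Delta U$ (the latter being the CS extension of $\Delta u\in C_c^\infty(M)\subset H^s(M)$).
\end{itemize}

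The main obstacle is that $\Ric(\nabla U,\nabla U)$ may be negative, so only the combination $|\nabla^2U|^2+\Ric$ is controlled. That is precisely how the hypothesis is phrased, so the combination should be kept grouped throughout rather than bounded termwise. A secondary check is that the spectral/Fubini manipulations are justified. This holds because all integrands are nonnegative (Tonelli) or are dominated via the $L^2$ spectral bounds above.
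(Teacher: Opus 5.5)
Your proposal is correct and follows the paper's proof. For the second claim you use the same slicewise Bochner expansion: you keep $|\nabla^2 U|^2+\Ric(\nabla U,\nabla U)$ grouped under the hypothesis, control $z^{1-2s}|\nabla\partial_z U|^2$ by the first claim, and bound $z^{1-2s}\ov\nabla U\cdot\ov\nabla\Delta U$ by Cauchy--Schwarz, using that $\Delta U$ is the CS extension of $\Delta u$. For the first claim, your explicit Bessel-multiplier computation giving $c_s\|(-\Delta)^{\frac{1+s}{2}}u\|_{L^2(M)}^2$ unrolls the paper's step $\|\nabla\partial_z U(\cdot,z)\|_{L^2(M)}=\|\partial_z(-\Delta)^{1/2}U(\cdot,z)\|_{L^2(M)}$ followed by the weighted energy bound for the extension of $(-\Delta)^{1/2}u$; your aside about ``$\nabla\partial_z\partial_z U$-type terms'' is not needed.
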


\begin{proof} Observe that $V:=(-\Delta)^{1/2} U$ is the CS extension of $(-\Delta)^{1/2} u \in H^s(M)$. Then, regarding \eqref{eq: L^1 bound d_z Du} we have
\begin{align*}
    \int_{\ov M}  z^{1-2s}  |\nabla \partial_z U|^2  d\mu dz &= \int_0^\infty z^{1-2s} \|\nabla \partial_z U(\cdot,z)\|_{L^2(M)}^2 dz , 
\end{align*}
but for fixed $z>0$
\begin{equation*}
   \|\nabla \partial_z U(\cdot,z)\|_{L^2(M)}^2 =  \|(-\Delta)^{1/2} \partial_z U (\cdot,z) \|_{L^2(M)}^2   = \| \partial_z (-\Delta)^{1/2} U (\cdot,z) \|_{L^2(M)}^2= \| \partial_z V (\cdot,z) \|_{L^2(M)}^2 , 
\end{equation*}
hence 
\begin{equation*}
    \int_{\ov M}  z^{1-2s}  |\nabla \partial_z U|^2  d\mu dz = \int_{\ov M}  z^{1-2s}  |\partial_z V|^2  d\mu dz \lesssim  [(-\Delta)^{1/2} u]_{H^s(M)}^2 =  \|(-\Delta)^{\frac{1+s}{2}} u\|_{L^2(M)}^2 <\infty .
\end{equation*}

Now we prove \eqref{eq: L^1 bound Delta|DU|^2}. By Bochner's formula 
   \begin{equation*}
       \frac{1}{2}z^{1-2s}\Delta |\ov \nabla U|^2 =  z^{1-2s} \ov \nabla U \cdot \ov \nabla \Delta U + z^{1-2s}|\nabla \partial_z U|^2 +  z^{1-2s}\big(|\nabla^2 U|^2+\Ric(\nabla U, \nabla U) \big)
   \end{equation*}
The second term on the r.h.s. lies in $L^1(\ov M)$ by \eqref{eq: L^1 bound d_z Du}, and by hypothesis the last term (with the horizontal Hessian and the Ricci curvature) lies in $L^1(\ov M)$ too. 

We are left to show that $z^{1-2s} \ov \nabla U \cdot \ov \nabla \Delta U \in L^1(\ov M)$, but this easily follows by Cauchy-Schwarz since $\Delta U$ is the extension of $\Delta u$ and $u, \Delta u \in H^s(M)$. 
    
\end{proof}

\begin{lemma}\label{lem: density in Hs}
Let $s \in (0,1)$ and \(M\) be a complete Riemannian manifold without boundary. Then \(C_c^\infty(M)\) is dense in \(H^s(M)\). 
\end{lemma}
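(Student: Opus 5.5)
The plan is to use the spectral description of $H^s(M)$ together with the classical density of $C_c^\infty(M)$ in the domain of the Laplacian on a complete manifold. Since $M$ is complete, $-\Delta$ is essentially self-adjoint on $C_c^\infty(M)$. Hence $C_c^\infty(M)$ is a core for $-\Delta$: it is dense in ${\rm Dom}(-\Delta)$ with respect to the graph norm $\|u\|_{L^2}+\|\Delta u\|_{L^2}$.

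The key observation is a comparison of spectral weights. For $s\in(0,1)$ and $\lambda\ge0$ one has $1+\lambda^s\le 2(1+\lambda)\le 2(1+\lambda)^2$. Writing the $H^s$ norm through the spectral measure as
\[
\|u\|_{H^s}^2=\int_{\sigma(-\Delta)}(1+\lambda^s)\,d\|E_\lambda u\|^2 ,
\]
the inequality gives $\|u\|_{H^s}\le C\big(\|u\|_{L^2}+\|\Delta u\|_{L^2}\big)$ for $u\in{\rm Dom}(-\Delta)$. So graph-norm convergence implies $H^s$ convergence, and $C_c^\infty(M)$ is $H^s$-dense in ${\rm Dom}(-\Delta)$.

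It then remains to show that ${\rm Dom}(-\Delta)$ is dense in $H^s(M)$. Given $u\in H^s(M)$, I would take the spectral truncations $u_N:=E_{[0,N]}u$. Each $u_N$ lies in ${\rm Dom}(-\Delta)$, in fact in every ${\rm Dom}((-\Delta)^k)$, since its spectral measure is supported in $[0,N]$. Moreover
\[
\|u-u_N\|_{H^s}^2=\int_{(N,\infty)}(1+\lambda^s)\,d\|E_\lambda u\|^2\to0
\]
by dominated convergence, because the integral over the whole spectrum is finite. One could equally use the heat flow $P_tu$ as $t\downarrow0$.

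Combining the two steps by a diagonal argument gives density of $C_c^\infty(M)$ in $H^s(M)$. The only genuinely nontrivial input is essential self-adjointness on complete manifolds (Gaffney/Strichartz), which the paper already invokes. If one prefers to avoid it, the obstacle would be constructing cutoffs $\eta_k$ with $|\Delta\eta_k|$ bounded, which need not exist in general. I would therefore rely on essential self-adjointness, or alternatively approximate in ${\rm Dom}((-\Delta)^{1/2})=H^1(M)$. There the standard Lipschitz cutoffs with $|\nabla\eta_k|\le 1/k$, followed by mollification, suffice, and the bound $1+\lambda^s\le 2(1+\lambda)$ shows that $H^1$ convergence already implies $H^s$ convergence.
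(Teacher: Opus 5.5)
Your proof is correct, but it takes a different route from the paper in how it gets from a first-order density statement to $H^s(M)$.

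The paper uses two inputs:
\begin{itemize}
\item density of $C_c^\infty(M)$ in $W^{1,2}(M)$ on complete manifolds;
\item essential self-adjointness, which identifies the form domain $H^1(M)$ with $W^{1,2}(M)$.
\end{itemize}
It then appeals to abstract interpolation: $H^s(M)=D(T^s)=[L^2(M),H^1(M)]_s$ for $T=(I-\Delta)^{1/2}$, together with the density theorem for interpolation spaces.

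You use essential self-adjointness in its most direct form: $C_c^\infty(M)$ is a core, i.e.\ graph-norm dense in ${\rm Dom}(-\Delta)$. You then replace the interpolation machinery with two elementary spectral-calculus facts:
\begin{itemize}
\item the weight comparison $1+\lambda^s\le 2(1+\lambda)^2$, which gives the continuous embedding ${\rm Dom}(-\Delta)\hookrightarrow H^s(M)$;
\item the truncations $E_{[0,N]}u$, which give density of ${\rm Dom}(-\Delta)$ in $H^s(M)$.
\end{itemize}

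Your route is more self-contained. It needs neither the density of $C_c^\infty(M)$ in $W^{1,2}(M)$ nor the identification $H^1(M)=W^{1,2}(M)$. It actually proves the general fact that a core of any nonnegative self-adjoint operator $A$ is dense in ${\rm Dom}(A^\theta)$ for all $\theta\in[0,1]$. It also makes explicit the continuous and dense embedding into $H^s(M)$ that the paper's final sentence uses only implicitly.

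The paper's route, in exchange, ties the spectral space $H^s(M)$ to the geometric Sobolev space $W^{1,2}(M)$. It is also phrased in a framework that transfers directly to other interpolation scales.

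Your closing alternative through $H^1(M)$ is essentially the paper's first step, and it likewise relies on identifying ${\rm Dom}((-\Delta)^{1/2})$ with $W^{1,2}(M)$. After that step, your comparison $1+\lambda^s\le 2(1+\lambda)$ and the spectral truncation again do the job of interpolation.
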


\begin{proof}
Since \(M\) is complete, by a classical result (e.g. \cite{Aubin}) there holds that \(C_c^\infty(M)\) is dense in \(W^{1,2}(M)\). Moreover, on a complete Riemannian manifold, the Laplacian with initial domain \(C_c^\infty(M)\) is essentially self-adjoint; see \cite{StrichartzJFA}.  Hence the operator \(I-\Delta\) is a positive self-adjoint operator on \(L^2(M)\), and its quadratic form domain is \(W^{1,2}(M)\). Therefore $H^1(M)=W^{1,2}(M)$ with equivalent norms, and it follows that \(C_c^\infty(M)\) is dense in \(H^1(M)\).

Now let $T:=(I-\Delta)^{1/2}$. Then \(T\) is positive self-adjoint on \(L^2(M)\), \(D(T)=H^1(M)\), and $H^s(M)=D(T^s)$ since $(1+\lambda^s)$ and $(1+\lambda)^s$ are comparable for $s\in (0,1)$. By the interpolation theorem for positive self-adjoint operators
\[
H^s(M) = D(T^s)=[L^2(M),D(T)]_s=[L^2(M),H^1(M)]_s,
\]
with equivalence of norms. Since $C_c^\infty(M)$ is dense in both $L^2(M)$ and $H^1(M)$, the standard density theorem for interpolation spaces (see \cite[Section 3.11]{IntSpaces}) gives that \(C_c^\infty(M)\) is dense $H^s(M)$.
\end{proof}

\vspace{10pt}

\noindent \textbf{Acknowledgements.} The authors would like to thank Emanuele Caputo, Mattia Freguglia, and Nicola Picenni for valuable comments on a preliminary draft of this work.

M.C. acknowledges the University of Warwick for the kind hospitality during the realization of part of this work. 

L.G. is supported by UK Research and Innovation (UKRI) under the Horizon Europe funding guarantee (grant agreement No.\ EP/Z000297/1).

	 	 	\bibliography{references}
	 	 \bibliographystyle{alpha}

\end{document}